\documentclass[11pt,reqno]{amsart}
\usepackage{graphicx}
\usepackage{color}
\usepackage{mathrsfs,amsmath,amssymb,amsthm,amsfonts}

\usepackage{graphics,url,color,epsfig}
\usepackage{tikz}
\usepackage{xcolor}
\definecolor{mysoftblue}{hsb}{0.55,0.15,0.9}
\definecolor{mysoftpurple}{hsb}{0.9,0.25,0.9}
\definecolor{mysoftorange}{hsb}{0.15,0.3,0.95}
\usetikzlibrary{patterns}
\usepackage[colorlinks]{hyperref}
\AtBeginDocument{%
  \hypersetup{%
    linkcolor=blue,%
    citecolor=blue,%
  }%
}
\usepackage{cleveref}
\usepackage{pgfplots}
\usepackage{multicol}
\usepackage{fix-cm}

\def\R{\mathbb{R}}

\makeatletter

\newcommand{\Rmnum}[1]{\expandafter\@slowromancap\romannumeral #1@}
\makeatother

\newtheorem{thm}{Theorem}[section]

\newtheorem{lemma}[thm]{Lemma}
\newtheorem{remark}[thm]{Remark}
\newtheorem{theorem}[thm]{Theorem}

\usepackage[numbers,sort&compress]{natbib}
\DeclareMathOperator{\supp}{supp}

\newcommand{\mb}{\mbox}
\everymath{\displaystyle}

\allowdisplaybreaks

\begin{document}
\author{Wenxiong Chen}
\address{Wenxiong Chen \newline\indent Department of Mathematical Sciences \newline\indent Yeshiva University \newline\indent New York, NY, 10033, USA}
\email{wchen@yu.edu}

\author{Yahong Guo}
\address{Yahong Guo \newline\indent School of Mathematical Sciences \newline\indent  Shanghai Jiao Tong University \newline\indent Shanghai, 200240, P. R. China}
\email{yhguo@sjtu.edu.cn}

\author{Leyun Wu}
\address{Leyun Wu
\newline\indent
School of Mathematics \newline\indent South China University of Technology \newline\indent Guangzhou, 510640, P. R. China}
\email{leyunwu@scut.edu.cn}

\title{Monotonicity for the fractional semi-linear problem  in a half space}

\begin{abstract}
In this paper, we study semilinear fractional equations $$(-\Delta)^s u(x) = f(u(x))$$ in a half-space and prove that all positive solutions are strictly increasing in the $x_n$-direction.

Previous results typically require the solution $u$ to be globally bounded in $\mathbb{R}^n$. We substantially weaken this condition by assuming only that $u$ be bounded in each slab. Moreover, our analysis relies solely on the local Lipschitz continuity of the nonlinearity $f$, which is weaker than the conditions imposed in earlier works.

As a crucial ingredient, we obtained a boundary H\"{older} regularity estimate that requires only the boundedness of $u$ near the boundary. This represents a significant improvement over existing results, which often assumed global boundedness of $u$ throughout $\mathbb{R}^n$. The proof introduces a new idea that may be of independent interest. 

To derive the monotonicity, we employ the method of moving planes. We first obtain a narrow region principle in unbounded domains, which ensures that the moving plane procedure can be initiated from $x_n = 0$. We then establish two averaging effects for the solutions to fractional equations. These key ingredients guarantee that the planes can be moved  continuously all the way to $x_n = \infty$. 

Previously, narrow region principle can only be applied to a single narrow region. It is for the first time that we establish a multiple narrow region principle that can be applied simultaneously to finitely many narrow regions. 

Compared with the traditional approaches, methods based on the {\em averaging effect} require substantially weaker regularity assumptions and can even accommodate unbounded solutions.

We believe that these new ideas and techniques develop here will serve as powerful tools in 
studying qualitative properties of solutions to fractional equations.

\end{abstract}


\keywords{Monotonicity; the fractional Laplacian; averaging effects; boundary regularity; multiple narrow region principle}

\maketitle

\numberwithin{equation}{section}
\section{Introduction and main results}

In this paper, we study the monotonicity of  solutions to the fractional semi-linear problem
\begin{equation}\label{main}
    \begin{cases}
    (-\Delta)^s u(x)=f(u), &  x\in \mathbb{R}^n_+,\\
    u(x)=0 & x \in \mathbb{R}^n\backslash \mathbb{R}^n_+,
      \end{cases}
\end{equation}
in a half space $\mathbb{R}^n_+=\{x =(x', x_n) \in
\mathbb{R}^n : x_n >0\}$
with $0<s<1$ and $n\geq 2s.$

Here $(-\Delta)^s$  is the fractional Laplacian,  a nonlocal operator defined by the singular integral
\begin{equation}\label{eq1-1}
(-\Delta)^s u(x)=C_{n, s}PV \int_{\mathbb{R}^n}\frac{u(x)-u(y)}{|x-y|^{n+2s}}dy,
\end{equation}
where $PV$
stands for the Cauchy principal value, and $C_{n, s}$ is a normalization constant. 

In order that the integral on the right hand side of \eqref{eq1-1} is well defined, we require that $u\in C_{loc}^{1, 1}(\mathbb{R}^n_+)\cap \mathcal{L}_{2s}$, where
$$
\mathcal{L}_{2s}=\left\{u\in L_{loc}^1 \,\Big| \int_{\mathbb{R}^n}\frac{|u(x)|}{1+|x|^{n+2s}}dx<\infty \right\}
$$
endowed naturally with the norm
$$
\|u\|_{\mathcal{L}_{2s}}:= \int_{\mathbb{R}^n}\frac{|u(x)|}{1+|x|^{n+2s}}dx.
$$
Because of the non-locality of  the fractional Laplacian, it is necessary to  assume $u(x)=0$ in the whole complement of $\mathbb{R}^n_+$, rather than merely on  the boundary $\partial \mathbb{R}^n_+.$

Our goal is to prove that any nonnegative, nontrivial solutions $u$ of \eqref{main} is srictly increasing in the $x_n$-direction, without assuming that $u$ is bounded in the whole space $\mathbb{R}^n$ and under very weak conditions on the nonlinearity $f(u)$. We emphasize that monotonicity in a half-space plays a crucial role in qualitative analysis, as it often serves as a key step toward classification results, nonexistence theorems, and one-dimensional symmetry.



The fractional Laplacian and, more generally, nonlocal elliptic operators arise both in pure mathematical research and in wide range of applications, such as the thin obstacle problem \cite{S2007CPAM, CSS2008IM}, minimal surfaces \cite{CRS2010CPAM, CV2011CVPDE}, phase transitions \cite{SV2009JFA}, crystal dislocation \cite{GM2012DCDS}, Markov processes \cite{GM2005SD}, and fractional quantum mechanics \cite{L2000PLA}. For an elementary introduction to the subject, see \cite{DPV2012BSM} and the references therein.

As pointed out in \cite{G2006CMP}, the fractional Laplacian is the infinitesimal generator of symmetric stable processes $(0<s<1)$, commonly known in the physics literature as L\'evy flights. These processes paly a central role in stochastic modeling, with applications in operations research, queuing theory, mathematical finance, and risk estimation. In contrast to Brownian motion $(s=1)$, which arises as the scaling limit of random walks where particles jump only to nearest-neighbor sites, stable processes can be viewed as the limiting models of random walks in which particles may jump to arbitrary sites, with probabilities decaying according to a power law (see also \cite{V2009BSEMAS}).
The essential difference is that such models incorporate long-range interactions rather than purely local ones.
\medskip

Before presenting our results, we briefly recall some known related achievements which serve as the main motivation for our study. 

 In the local case $s=1$, the most relevant references  are a series of papers by Berestycki, Caffarelli, and Nirenberg \cite{BCN1,BCN2,BCN3,BCN4}, where  several qualitative properties of solutions to
\begin{equation}\label{main-i}
    \begin{cases}
    -\Delta u(x) = f(u), & x \in \mathbb{R}^n_+, \\[0.3em]
    u(x) = 0, & x \in \partial \mathbb{R}^n_+,
    \end{cases}
\end{equation}
were established under the assumption that the nonlinearity $f$ is Lipschitz continuous.
In particular, they proved that if $f(0) \geq 0$, then every positive solution of \eqref{main-i} satisfies
\[
\frac{\partial u}{\partial x_n} > 0 \quad \text{in } \mathbb{R}^n_+
\]
(see \cite{BCN1,BCN2}). This monotonicity property was first established by Dancer in \cite{Dancer1,Dancer2} under stronger assumptions on both the solutions and the nonlinearities. Later, Farina \cite{Farina2020ME} showed that if $u$ is bounded on each slab $\mathbb{R}^{n-1} \times [0,z]$ for every $z>0$, $f(0)\geq 0$, and $f$ is locally Lipschitz, then every solution to \eqref{main-i} is strictly increasing in the $x_n$-direction.
More recently, Beuvin, Farina, and Sciunzi \cite{BFS2025} established monotonicity results for the equation $-\Delta u = f(u)$ on continuous epigraphs bounded from below, subject to homogeneous Dirichlet boundary conditions, assuming that $f$ is a (locally or globally) Lipschitz function with $f(0)\geq 0$; see also \cite{EL1982PRSE}.
The case $f(0) < 0$ is considerably more delicate and remains to date,  not completely understood. For dimensions $n=2$ or $n=3$, several results were obtained in \cite{BCN2,FS2016RMI,FS2013JMAA}, while in higher dimensions some partial results were derived in \cite{CEG2016JMPA} under the additional assumption $u \geq u_0$. The main difficulty stems from the existence of a one-dimensional periodic solutions of \eqref{main-i} that are not strictly positive.

To the best of our knowledge, only partial results are available regarding the monotonicity of solutions to the nonlocal problem \eqref{main}.

For special nonlinearities of the form $f(t) = t^p$, Quaas and Xia \cite{QX2013CVPDE} established a Liouville-type theorem for bounded positive solutions, while Chen, Fang, and Yang \cite{CFY2015} considered the case of solutions satisfying certain integrability conditions.
Dipierro, Soave, and Valdinoci \cite{DSV2017MA} analyzed monotonicity in more general domains with the epigraph property,  but only for a very restricted class of nonlinearities.

For rather more general nonlinearity $f(t)$, in \cite{FW2016CCM}, Fall and Weth established monotonicity for positive {\em bounded} solutions of \eqref{main} under the assumptions that $f$ is a non-decreasing, locally Lipschitz function satisfying $f(t)>0$ for $t>0$ and
\[
\lim_{\substack{r,t \to 0 \\ r \neq t}} \frac{f(r)-f(t)}{r-t} = 0.
\]
Subsequently, Barrios, Del Pezzo, Garc\'{i}a-Meli\'{a}n, and Quaas \cite{BDGQ2017CVPDE} proved that nonnegative, nontrivial, {\em bounded} classical solutions of \eqref{main} are strictly positive and strictly monotone increasing in the $x_n$-direction.

Most recently, Barrios, Garc\'{i}a-Meli\'{a}n, and Quaas \cite{BGQ2019PAMS} showed that the same conclusions remain valid if $f$ is assumed only to be locally Lipschitz. However, as in \cite{FW2016CCM} and \cite{BDGQ2017CVPDE}, their results still  require the solutions to be {\em globally bounded}. 

This common {\em global boundedness assumption on the solutions} appears to be mainly technical, and it is partly due to the nonlocal nature of the fractional Laplacian. 

Can this assumption be weakened? Addressing this issue is one of the motivation of our present paper.  After introducing several innovative techniques, we are able to relax the {\em global boundedness condition} and only require the solutions be bounded on each slab $\Omega_\lambda = \{x \mid 0<x_n< \lambda.\}$:

\textbf{(BDS)} For every $\lambda > 0$ there exists a constant
$C(\lambda) > 0$ such that
$0 \leq u \leq C(\lambda) $ on  $\Omega_\lambda.$
\medskip

\begin{theorem}\label{th1.10}
Let $n\geq 2s,$ {$f\in C_{loc}^{0, 1}([0, \infty))$}
 with $f(0) \geq 0$. Assume that  $u\in C_{loc}^{1, 1}(\mathbb{R}^n_+)\cap \mathcal{L}_{2s}$ 
 is a nonnegative, nontrivial classical  solution of \eqref{main} satisfying the \textbf{(BDS)}  condition.
Then 
$$ \frac{\partial u}{\partial x_n} (x) > 0, \;\; \forall \, x \in \mathbb{R}^n_+.$$
\end{theorem}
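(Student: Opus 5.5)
We argue by the method of moving planes in the $x_n$-direction. For $\lambda>0$ let $T_\lambda=\{x_n=\lambda\}$, let $x^\lambda=(x',2\lambda-x_n)$ be the reflection of $x$ across $T_\lambda$, set $u_\lambda(x)=u(x^\lambda)$, and define
$$
w_\lambda(x)=u_\lambda(x)-u(x),\qquad \Sigma_\lambda=\{x\in\mathbb{R}^n_+ \mid 0<x_n<\lambda\}=\Omega_\lambda .
$$
Then $w_\lambda$ is antisymmetric with respect to $T_\lambda$. On $\Sigma_\lambda$ it satisfies
$$
(-\Delta)^s w_\lambda=c_\lambda(x)w_\lambda,\qquad c_\lambda=\frac{f(u_\lambda)-f(u)}{u_\lambda-u}.
$$
Moreover $w_\lambda\ge 0$ on the reflected lower half-space $\{x_n<0\}$, since $u_\lambda\ge0=u$ there. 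The key point is that, by \textbf{(BDS)}, both $u$ and $u_\lambda$ take values in $[0,C(2\lambda)]$ on $\Sigma_\lambda$. So only the Lipschitz constant $L_\lambda$ of $f$ on $[0,C(2\lambda)]$ enters, and $|c_\lambda|\le L_\lambda$. This is exactly where local Lipschitz continuity suffices. We want to prove $w_\lambda\ge0$ in $\Sigma_\lambda$ for every $\lambda>0$.

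\textbf{Step 1 (positivity and boundary regularity).} First, $u>0$ in $\mathbb{R}^n_+$. Indeed, if $u(x_0)=0$ at an interior point, then $(-\Delta)^s u(x_0)<0$ because $u\not\equiv0$, while $f(0)\ge0$; this is a contradiction. Next we invoke the boundary H\"older estimate announced in the abstract, which needs boundedness of $u$ only near $\partial\mathbb{R}^n_+$, to get $u\in C^{\alpha}$ uniformly up to $\{x_n=0\}$ on each slab. This uniform continuity yields two things. It makes $w_\lambda$ continuous up to $x_n=0$. It also ensures that any negative infimum of $w_\lambda$ in $\Sigma_\lambda$ cannot escape to the boundary $\{x_n=0\}$, where $w_\lambda\ge0$.

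\textbf{Step 2 (starting the plane).} Apply the narrow region principle in unbounded domains to $\Sigma_\lambda$ for small $\lambda$. Since $c_\lambda$ is bounded below by $-L_\lambda$, and the narrow-region constant grows like $\lambda^{-2s}$, the principle applies once $\lambda$ is small. It gives $w_\lambda\ge0$ in $\Sigma_\lambda$ for all $0<\lambda<\lambda_0$.

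Because $\Sigma_\lambda$ is unbounded in $x'$, the infimum of $w_\lambda$ need not be attained. We handle this by the usual device of translating in $x'$ along a minimizing sequence and passing to a limit. The limit is justified by the slab bounds and the local estimates.

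\textbf{Step 3 (moving to infinity).} Let $\lambda^*=\sup\{\lambda \mid w_\mu\ge0 \text{ in }\Sigma_\mu\ \forall\mu\le\lambda\}$ and suppose $\lambda^*<\infty$. We need $w_{\lambda^*+\epsilon}\ge0$ for small $\epsilon>0$, which gives a contradiction.

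By the strong maximum principle for antisymmetric functions, $w_{\lambda^*}>0$ in $\Sigma_{\lambda^*}$ unless it vanishes identically. The latter is impossible because $u>0$ in $\Sigma_{\lambda^*}$ while $u_{\lambda^*}=0$ at the points of $\Sigma_{\lambda^*}$ reflected from $\{x_n<0\}$.

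The difficulty is that positivity of $w_{\lambda^*}$ is not uniform in $x'$. Here we use the two averaging effects. For a nonnegative antisymmetric $w_{\lambda^*}$ satisfying the equation, the nonlocal integral over $\{x_n<0\}\cup\Sigma_{\lambda^*}$ forces a quantitative lower bound of $w_{\lambda^*}$ on compact-in-$x_n$ pieces, say $\{\delta\le x_n\le\lambda^*-\delta\}$. This bound should be uniform in $x'$ after translation and compactness, and it is inherited by $w_{\lambda^*+\epsilon}$ for small $\epsilon$.

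The remaining set consists of two thin strips, $\{0<x_n<\delta\}$ and $\{\lambda^*-\delta<x_n<\lambda^*+\epsilon\}$. We apply the multiple narrow region principle to both simultaneously, exploiting the positivity already established on the middle part. This yields $w_{\lambda^*+\epsilon}\ge0$, contradicting the definition of $\lambda^*$.

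\textbf{Conclusion.} Hence $\lambda^*=\infty$ and $u$ is nondecreasing in $x_n$, with $w_\lambda>0$ in $\Sigma_\lambda$ by the strong maximum principle. Since $u$ is $C^{1,1}_{loc}$, differentiating at $T_\lambda$ gives $\partial_{x_n}u\ge0$. Strictness follows from applying $(-\Delta)^s$ at a point where $\partial_{x_n}u$ vanishes, using $w_\lambda>0$ together with a Hopf-type (antisymmetric) argument at $T_\lambda$.

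\textbf{Main obstacle.} I expect the hardest step to be Step 3, namely obtaining the uniform-in-$x'$ lower bound from the averaging effect without global boundedness. The tails of $u$ for large $x_n$ are only controlled through $\mathcal{L}_{2s}$. The planned fix is to exploit the sign of $w$, nonnegative outside the strips, so that the tail contributes favorably rather than requiring bounds.
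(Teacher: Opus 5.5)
There is a genuine gap in Step 3, at the claim that $w_{\lambda^*}$ has a positive lower bound on $\{\delta\le x_n\le\lambda^*-\delta\}$ that is uniform in $x'$, and is therefore inherited by $w_{\lambda^*+\epsilon}$.

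Try to prove it by translation and compactness. Take points $(x'_k,t_k)$ in that strip with $w_{\lambda^*}(x'_k,t_k)\to0$, translate $u$ by $x'_k$, and pass to a limit $u_\infty$; the compactness comes from (BDS), Lemma~\ref{le3.1} and Theorem~\ref{th1.4}. The limit satisfies $w_{\infty,\lambda^*}\ge0$ with an interior zero, so the strong maximum principle gives $w_{\infty,\lambda^*}\equiv0$. Since $u_\infty=0$ on $\{x_n<0\}$, this makes $u_\infty$ vanish on $\{x_n>2\lambda^*\}$, and then $u_\infty\equiv0$.

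That is a contradiction only if $f(0)>0$. When $f(0)=0$, nothing in the hypotheses prevents $u$ from decaying along $x'_k$. Then $w_{\lambda^*}\to0$ there too, no absolute lower bound exists, and the two-strip narrow region argument has nothing to stand on. The averaging effects do not rescue this: they only propagate a lower bound you already have on some set, with constants depending on that bound. So the real obstacle is not the $x_n$-tails you flagged, but the possible degeneration of $u$ in the $x'$-directions.

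The missing idea is renormalization by the size of $u$. The paper argues by contradiction with $\lambda_k\searrow\lambda_0$ and points $z^k$ with $w_{\lambda_k}(z^k)<0$. It uses the multiple narrow region principle (Theorem~\ref{lem:multi-slab-nrp-simple}) to confine $z^k$ to $\{\delta<x_n<\lambda_0-\delta\}$, translates by $(z^k)'$, and shows exactly as above that the limit vanishes identically, which forces $f(0)=0$.

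It then considers $\tilde v_k=\tilde u_k/\tilde u_k(\tilde z)$, and the argument runs as follows.
Because $f(0)=0$, these solve $(-\Delta)^s\tilde v_k=c_k\tilde v_k$ with $c_k=f(\tilde u_k)/\tilde u_k$ locally bounded by (BDS).
The Harnack inequality (Lemma~\ref{le3.2}) gives locally uniform bounds, and Lemma~\ref{le3.1} with Theorem~\ref{th1.4} gives compactness.
The limit $v_\infty$ has $v_\infty(\tilde z)=1$, while its antisymmetric difference with respect to $T_{\lambda_0}$ is nonnegative and vanishes at $\tilde z$.
Two applications of the strong maximum principle force $v_\infty\equiv0$, a contradiction.

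What is uniform in $x'$ is positivity of $w$ relative to $u$, not absolute positivity of $w$. Your Step 3 therefore has to be run directly on the negative points of $w_{\lambda_k}$ at this normalized level. The paper's averaging-effect proof of Theorem~\ref{th1.1} passes through $u(x^k)\to0$, $f(0)=0$ and $v_k=u/u(x^k)$ for the same reason.
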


As usual, the theorem is proved by using the method of moving planes. In this procedure, one classical approach is to take limit along a sequence of solutions, which requires boundary regularity of solutions. In \cite{BGQ2019PAMS}, in order to obtain boundary H\"{o}lder regularity for the solutions $u$, the authors constructed a super solution; however due to the nonlocal nature of the fractional Laplacian, this construction 
requires $u$ to be {\em globally bounded}. 

Employing a completely new idea, we are able to establish the same boundary regularity under much weaker assumption that the solutions are merely {\em locally bounded} near the boundary. 

\begin{theorem}\label{th1.4} (Boundary regularity)
Assume that $u \in \mathcal{L}_{2s}$ is a nonnegative solution to
\begin{equation}\label{eqfp}
\begin{cases}
(-\Delta)^su(x)=f(x), & x\in \mathbb{R}^n_+,\\
u(x)=0,& x\in \mathbb{R}^n \backslash  \mathbb{R}^n_+.
\end{cases}
\end{equation}
Suppose $u(x)$ and $f(x)$ are locally  bounded near the boundary $\partial \mathbb{R}^n_+$. 

Then $u(x)$ is $s$-H\"older continuous up to the boundary $\partial \mathbb{R}^n_+$. More precisely,
\[
u(x) \leq C x_n^{s},
\qquad \text{for } x_n>0 \text{ sufficiently small},
\]
where $C$ is a constant depending only on $n$, $s$, 
the $L^\infty$ norm of $u$ and $f$ near the boundary,  and $\|u\|_{\mathcal{L}_{2s}}$.
\end{theorem}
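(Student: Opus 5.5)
The plan is to localize: cut $u$ off so that the part seen by the boundary estimate is bounded, and treat the far-away part as a bounded right-hand side. Fix a boundary point, which by translation invariance in $x'$ we may take to be the origin. Assume $u\le M$ and $|f|\le M$ in $B_4\cap\{x_n<4\}$; by covering, it suffices to prove $u(x)\le Cx_n^s$ in $B_1\cap\mathbb{R}^n_+$ with $C$ uniform in the base point. Let $\eta\in C_c^\infty(B_3)$ with $0\le\eta\le1$ and $\eta\equiv1$ on $B_2$. Split $u=v+w$ with $v=\eta u$ and $w=(1-\eta)u\ge0$.

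\textbf{Step 1: the equation for $v$.} Since $w\equiv0$ in $B_2$, for $x\in B_1\cap\mathbb{R}^n_+$ we get
\[
(-\Delta)^s w(x)=-C_{n,s}\int_{\mathbb{R}^n\setminus B_2}\frac{w(y)}{|x-y|^{n+2s}}\,dy,
\qquad\text{so}\qquad |(-\Delta)^s w(x)|\le C\|u\|_{\mathcal{L}_{2s}}.
\]
Here the new idea is simply not to need any global bound: nonnegativity of $u$ and the $\mathcal{L}_{2s}$ norm control the tail. Hence $v$ satisfies $(-\Delta)^s v=f-(-\Delta)^s w=:g$ in $B_1\cap\mathbb{R}^n_+$, with
\[
|g|\le M+C\|u\|_{\mathcal{L}_{2s}},\qquad 0\le v\le M \text{ in }\mathbb{R}^n,\qquad v=0 \text{ outside }\mathbb{R}^n_+.
\]

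\textbf{Step 2: a barrier.} We need an upper barrier $\varphi$ on $D=B_1\cap\mathbb{R}^n_+$ with the following properties:
\begin{itemize}
\item $(-\Delta)^s\varphi\ge \|g\|_\infty$ in $D$;
\item $\varphi\ge M$ on $\mathbb{R}^n_+\setminus B_1$;
\item $\varphi\ge0$ everywhere;
\item $\varphi\le Cx_n^s$ near the origin.
\end{itemize}
A natural choice is $\varphi=A\,(x_n)_+^s+B\psi$, where $\psi$ handles the lateral part. Since $(-\Delta)^s (x_n)_+^s=0$ in the half space, the source term has to come from elsewhere.

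We will use instead the torsion function of a ball. Take $\psi(x)=(1-|x-e|^2)_+^s$ on the ball $B_1(e)$ tangent to... but that is not contained in the half space near the origin in the right way. A cleaner option is $\psi(x)=\big(x_n(2-x_n)\big)_+^s$, the one-dimensional torsion function of the slab $\{0<x_n<2\}$ extended in $x'$. It satisfies $(-\Delta)^s\psi=c>0$ in the slab and $\psi\le C x_n^s$.

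Add $M\,\zeta$, where $\zeta$ is a smooth function with $0\le\zeta\le1$, $\zeta=0$ in $B_{1/2}$, and $\zeta=1$ outside $B_1$. Then $(-\Delta)^s\zeta\ge -C$ in $B_{1/2}$. This negative error is absorbed by taking a large multiple of $\psi$, after restricting $D$ to $B_{1/2}\cap\{x_n<1\}$ and rescaling.

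\textbf{Step 3: conclusion.} Set $\varphi=K\psi+M\zeta$ with $K$ large. Apply the maximum principle for $(-\Delta)^s$ on the bounded domain $D$ to $\varphi-v$: it satisfies $(-\Delta)^s(\varphi-v)\ge0$ in $D$ and $\varphi-v\ge0$ outside $D$, because outside $D$ either $v=0$ or $\zeta=1\ge v/M$. This gives $v\le\varphi\le Cx_n^s$ in $B_{1/4}$. Since $u=v$ there, the claim follows, with $C$ depending on $n$, $s$, $M$ and $\|u\|_{\mathcal{L}_{2s}}$.

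The main obstacle is verifying the barrier computation. First, we must check that $(-\Delta)^s\psi$ is a positive constant for the one-dimensional torsion profile extended in $x'$; this reduces to the known one-dimensional formula. Second, we must check that the error $(-\Delta)^s\zeta$ is bounded on the region where $\zeta=0$. Both are standard once the localization in Step 1 removes the need for global boundedness. If uniformity in the base point is an issue, note that all constants are translation invariant in $x'$.
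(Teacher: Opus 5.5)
Your argument is correct in substance, but it takes a genuinely different route from the paper's.

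\textbf{How the paper argues.} It writes $u=v+w$, where $v$ is the Green potential of $f$ in the tangent ball $B_1(e_n)$, bounded by $\|f\|_\infty(1-|x-e_n|^2)_+^s$, and $w$ is the Poisson integral of the exterior values of $u$. Since $u$ is only known to be bounded in the cusp between $\partial B_1(e_n)$ and $\{x_n=0\}$, a first explicit integration of the Poisson kernel gives only $w\lesssim x_n^{\min\{s,1-s\}}$. The exponent $s$ is then reached by feeding $u\lesssim y_n^{k(1-s)}$ back into the cusp integral finitely many times.

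\textbf{How you argue, and what each route buys.} You cut off and absorb the far field into a bounded right-hand side via $|(-\Delta)^s((1-\eta)u)|\le C\|u\|_{\mathcal{L}_{2s}}$ on $B_1$. You then compare with the flat barrier $K\bigl(x_n(2-x_n)\bigr)_+^s+M\zeta$, whose zero set is exactly $\partial\mathbb{R}^n_+$. With no cusp there is no loss of exponent, so you get $x_n^s$ in one step. The only tools are the comparison principle and the one-dimensional torsion identity, with no Poisson-kernel computation and no iteration. The paper's route, in exchange, gives an explicit representation of $u$ near the boundary point.

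\textbf{Three points to tidy up.}

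First, do not restrict $D$ to $B_{1/2}$. Since $1-\zeta\in C_c^\infty(\mathbb{R}^n)$, $(-\Delta)^s\zeta$ is bounded on all of $\mathbb{R}^n$. So keep $D=B_1\cap\mathbb{R}^n_+\subset\{0<x_n<1\}$ and choose $K$ with $Kc\ge \sup_D g+M\|(-\Delta)^s\zeta\|_{L^\infty}$, where $c=(-\Delta)^s\psi$ in the slab. With $D=B_{1/2}\cap\mathbb{R}^n_+$, your exterior claim ``$v=0$ or $\zeta=1$'' fails on $(B_1\setminus B_{1/2})\cap\mathbb{R}^n_+$, where $0<\zeta<1$ and $K\psi\to 0$ as $x_n\to0$.

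Second, the comparison principle on $D$ needs $\varphi-v$ to be lower semicontinuous on $\overline D$, i.e.\ $\limsup u\le 0$ at $\{x_n=0\}$. That boundary continuity is part of what you are proving, so you cannot assume it. The fix is to replace $\varphi$ by $\varphi+\epsilon(x_n)_+^{s-1}$ and let $\epsilon\to0$. This function is $s$-harmonic in $\mathbb{R}^n_+$ (it is the Kelvin transform of $(x_n)_+^s$), lies in $\mathcal{L}_{2s}$, and blows up at $x_n=0$. Hence a negative minimum of $\varphi+\epsilon(x_n)_+^{s-1}-v$ can only be attained inside $D$.

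Third, $\|u\|_{\mathcal{L}_{2s}}$ is not translation invariant, contrary to your closing remark. At a base point $x_0$ your tail constant is controlled by $\int u(y)(1+|y-x_0|)^{-n-2s}\,dy\le C(1+|x_0|)^{n+2s}\|u\|_{\mathcal{L}_{2s}}$. So uniformity is only local in $x_0$, which is the same dependence as in the paper's $I_2$ term.
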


Previous results on the boundary regularity of solutions to nonlocal equations in half-spaces can be found in \cite{FW2016CCM,DSV2017MA}, where it was shown that $u$ is $s$-H\"older continuous on the boundary of the half-space via the Green representation formula, under the assumptions 
$$u \in L^\infty(\mathbb{R}^n) \mbox{ and } f \in L^\infty(\mathbb{R}^n_+).$$

The boundary behavior of solutions to boundary value problems driven by integro-differential operators has been extensively studied \cite{B1997, G2014, G2015, ROS1, ROS2, ROS3}.
For boundary value problems of the type \eqref{eqfp}, it is often necessary to assume that $u$ is bounded throughout the entire space $\mathbb{R}^n$.

In contrast, in our Theorem~\ref{th1.4}, by employing a novel iterative technique together with a refined analysis, we establish the same boundary regularity
 under the significantly weaker assumption that $u$ and $f$ are bounded only in a neighborhood of the boundary $\partial \mathbb{R}^n_+$, rather than in the whole space $\mathbb{R}^n$. We believe that this boundary regularity theorem will find broad applications in the study of qualitative properties of solutions to fractional equations.

To establish the boundary regularity (Theorem \ref{th1.4}), we first decompose the solution into two parts: the potential function
\[
   v(x) = \int_{B_1(e_n)} G(x,y)\, f(y)\, dy,
\]
and the $s$-harmonic part
\[
   w(x) = u(x) - v(x),
\]
which can be represented by the Poisson kernel. 

We first show that $v(x)$ is $s$-H\"older continuous by constructing a suitable supersolution. 

Then under the local bounded-ness assumption on $u$,  by a careful analysis of the Poisson integral, we derive that
$w(x)$ is $\min\{s,\, 1-s\}$-H\"older continuous, 
Hence $u(x)$ is also $\min\{s,\, 1-s\}$-H\"older continuous. If $s \leq \frac{1}{2}$, we are done. Otherwise, substituting this H\"{o}lder regularity of $u$ 
back into the Poisson integral and through another round of fine estimate, we obtain higher regularity of $w$ and hence of $u$. After finitely many steps of such iterations, we ultimately arrive at $s$-H\"{o}lder boundary regularity of the solution $u$.
\medskip 

Our second main result concerning the monotonicity of the solution without assuming the global boundedness of solutions is as follows. 

\begin{theorem}\label{th1.1}
Assume $n\geq 2s,$ {$f\in C_{loc}^{0, 1}([0, \infty))$}
 with $f(0) \geq 0$ and let {$u\in C_{loc}^{1, 1}(\mathbb{R}^n_+)\cap \mathcal{L}_{2s}\cap C^0(\overline {\mathbb{R}^n_+})$ }
 be a  uniformly continuous,  nonnegative, nontrivial classical  solution of \eqref{main}.
Then $u(x)$ is strictly increasing along the $x_n$-direction. Furthermore
$$ \frac{\partial u}{\partial x_n} (x) > 0, \;\; \forall \, x \in \mathbb{R}^n_+.$$
\end{theorem}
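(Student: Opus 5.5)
We will use the method of moving planes in the $x_n$-direction. For $\lambda>0$ let $T_\lambda=\{x_n=\lambda\}$, let $x^\lambda=(x',2\lambda-x_n)$ be the reflection of $x$ across $T_\lambda$, and set $u_\lambda(x)=u(x^\lambda)$ and $w_\lambda=u_\lambda-u$. The strip is $\Sigma_\lambda=\{0<x_n<\lambda\}$. On $\Sigma_\lambda$ we have
\[(-\Delta)^s w_\lambda = f(u_\lambda)-f(u)=c_\lambda(x)w_\lambda.\]
The function $w_\lambda$ is antisymmetric with respect to $T_\lambda$, and $w_\lambda\ge 0$ on $\{x_n\le 0\}$ because $u=0$ there. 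The goal is to show $w_\lambda\ge 0$ in $\Sigma_\lambda$ for every $\lambda>0$.

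\textbf{Step 1: bounding $c_\lambda$.} Uniform continuity of $u$ on $\overline{\mathbb{R}^n_+}$, together with $u=0$ on $\partial\mathbb{R}^n_+$, gives the bound $u\le C(1+x_n)$. Hence $u$ and $u_\lambda$ are bounded on $\Sigma_\lambda$ and on its reflection, so the \textbf{(BDS)} property holds automatically. Since $f$ is locally Lipschitz, $c_\lambda$ is bounded on the negative set $\Sigma_\lambda^-=\{w_\lambda<0\}$ by the Lipschitz constant $L(\lambda)$ of $f$ on $[0,\sup_{\Sigma_{2\lambda}}u]$.

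\textbf{Step 2: starting the planes.} For small $\lambda$ the strip $\Sigma_\lambda$ is narrow. Suppose $w_\lambda$ had a negative infimum. We use the standard unbounded-domain narrow region argument: take a minimizing sequence, translate it in $x'$, and multiply by a cutoff (or compare with a perturbation such as $w_\lambda+\varepsilon\,\psi$, where $\psi$ is a bounded positive supersolution adapted to the strip). The antisymmetric estimate $(-\Delta)^s w_\lambda(\bar x)\le C w_\lambda(\bar x)/\delta(\bar x)^{2s}$, valid at an approximate minimum point $\bar x$ with $\delta$ the width of the strip, then contradicts $c_\lambda\le L$ once $\lambda^{2s}L(\lambda)$ is small. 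This gives $w_\lambda\ge 0$ for $\lambda$ small.

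\textbf{Step 3: continuation.} Let $\lambda_0=\sup\{\lambda: w_\mu\ge 0 \text{ in }\Sigma_\mu \ \forall \mu\le\lambda\}$ and suppose $\lambda_0<\infty$. We need $\inf_{\Sigma_{\lambda_0-\delta}} w_{\lambda_0}>0$ on compact-in-$x_n$ pieces, uniformly in $x'$. Argue by contradiction. Take points $x^k$ with $w_{\lambda_0}(x^k)\to 0$, $x^k_n\in[\delta,\lambda_0-\delta]$, and translate: $u_k(x)=u(x+(x^{k\prime},0))$. Uniform continuity and the linear growth bound give equicontinuity and local boundedness; interior $C^{1,1}$-type estimates on bounded slabs handle the interior, and Theorem~\ref{th1.4} gives uniform $C^s$ control up to $x_n=0$. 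This boundedness near the boundary is exactly what that theorem needs. The $\mathcal L_{2s}$ tails of $u_k$ are controlled uniformly by linear growth, since $2s>1$ is not required for $\int x_n/(1+|x|^{n+2s})$ restricted to slabs... Here care is needed: for $s\le 1/2$ linear growth is not integrable against the kernel. So instead we will use that the tail enters only through $w$, and rely on the averaging-effect lemmas. The limit $u_\infty$ solves \eqref{main} with $w^\infty_{\lambda_0}\ge 0$ and a zero at an interior point. The strong maximum principle for antisymmetric functions then forces $w^\infty_{\lambda_0}\equiv0$, which contradicts $u_\infty=0$ on $x_n\le0$ unless $u_\infty\equiv0$. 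We rule out $u_\infty\equiv0$ via $f(0)\ge0$ and positivity propagated by a lower barrier. Once uniform positivity away from $T_{\lambda_0}$ and $\partial\mathbb{R}^n_+$ is known, the remaining regions (near $x_n=0$ and near $T_{\lambda_0}$) are two narrow strips. There we apply a multiple narrow region principle for $\lambda\in[\lambda_0,\lambda_0+\varepsilon)$, which contradicts maximality.

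\textbf{Step 4: strict monotonicity.} We get $w_\lambda>0$ from the strong maximum principle, since $w_\lambda\not\equiv0$ because $u>0$ in $\mathbb{R}^n_+$ while $u=0$ outside. Differentiating at $\lambda=x_n$ gives $\partial_{x_n}u\ge0$. Strictness comes from evaluating $(-\Delta)^s w_\lambda$ at a point of $T_\lambda$ where $\partial_n w_\lambda=0$, using the antisymmetric integral, which is strictly positive.

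The main obstacle is the compactness in Step~3 without global boundedness. The nonlocal tail of the translated solutions must be controlled so that limits still satisfy the equation and the antisymmetric maximum principle. We would try to circumvent passing to the limit in the full equation by working directly with $w_{\lambda_0}$: its kernel representation only involves differences $\frac{1}{|x-y|^{n+2s}}-\frac{1}{|x-y^{\lambda}|^{n+2s}}$, which decay like $|y|^{-n-2s-1}$. This makes linear growth harmless, and it is the kind of averaging effect we expect to exploit.
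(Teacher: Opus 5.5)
The gap is in Step 3, at ``We rule out $u_\infty\equiv 0$ via $f(0)\ge 0$ and positivity propagated by a lower barrier.'' The hypotheses give no lower bound on $u$ that is uniform in $x'$, so your translates $u(\cdot+((x^k)',0))$ may converge to $0$ locally uniformly. If $f(0)=0$, then $u_\infty\equiv 0$ is an admissible limit. Your own chain (antisymmetric strong maximum principle, hence $u_\infty$ symmetric about $T_{\lambda_0}$, hence $u_\infty\equiv 0$) then ends without any contradiction. A barrier or averaging argument cannot rescue this, because it needs a ball where $u\ge C_0$ with $C_0$ independent of $k$, and that is exactly what is lost. This is not a marginal case: in the paper's contradiction argument it is the only case that survives (claim (ii): $u(x^k)\to 0$). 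When it occurs, $w_{\lambda_0}=u_{\lambda_0}-u$ degenerates along the same sequence. So the target of your Step 3, namely $\inf w_{\lambda_0}>0$ on $\{\delta\le x_n\le\lambda_0-\delta\}$ uniformly in $x'$, need not hold, and the concluding two-strip narrow region argument has nothing to start from. Separately, you assert that $u_\infty$ solves \eqref{main} immediately after conceding that the tails cannot be controlled for $s\le 1/2$.

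The missing idea is to measure positivity \emph{relative to} $u(x^k)$. The paper takes its contradiction points from negative values, $w_{\lambda_k}(x^k)\le -m_k+m_k^2<0$ with $\lambda_k\searrow\lambda_0$, not from small values of $w_{\lambda_0}$; after normalization, $w_{\lambda_0}(x^k)\to 0$ says nothing about $w_{\lambda_0}(x^k)/u(x^k)$. The paper proceeds as follows. It keeps $x^k$ away from $x_n=0$ and from $T_{\lambda_k}$ by perturbed narrow-region estimates. It shows $u(x^k)\to0$ with the averaging effects. It deduces $f(0)=0$ by evaluating the equation at the minimum of $u-u(x^k)\eta_k$, with $\eta_k$ a cutoff equal to $1$ near $x^k$. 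It then sets $v_k=u/u(x^k)$. Since $f$ is locally Lipschitz with $f(0)=0$, $v_k$ solves $(-\Delta)^s v_k=c(x)v_k$ with $c$ bounded on slabs. Lemma~\ref{le3.2} and Lemma~\ref{le3.1} then give $k$-independent bounds and H\"older estimates for $v_k$ near $x^k$. The averaging effects (Theorems~\ref{th1.3} and \ref{AE-anti}), applied to $v_k$ and to $w_{\lambda_0}/u(x^k)$, give $w_{\lambda_0}\ge c\,u(x^k)$ on a ball of fixed radius about $x^k$. The H\"older bound transfers this to $w_{\lambda_k}$ up to an error $C\,u(x^k)|\lambda_k-\lambda_0|^\gamma$, contradicting $w_{\lambda_k}(x^k)<0$. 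Because the averaging-effect constants do not depend on where the region sits, the paper never passes to a limit in the nonlocal equation, which also avoids your tail issue.

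Your Step 4 also does not work as written. At a point of $T_\lambda$ one has $|x-y|=|x-y^\lambda|$, so the antisymmetric integral vanishes identically. Strict positivity of $\partial_{x_n}u$ requires a Hopf lemma for antisymmetric functions, applied from inside the region below $T_\lambda$.
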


To overcome the difficulty caused by the nonlocality of the fractional Laplacian, we introduce a completely new approach that repeatedly exploits  the {\em averaging effects} of nonlocal operators; in particular, this mechanism is applied four times, as will be illustrated shortly.  

\begin{remark}
There are many possible choices for the nonlinearity $f$. A prototypical example is $f(t) = t - t^3$, which leads to the fractional Allen-Cahn equation, a widely studied model of phase transitions in media with long-range particle interactions (see, e.g., \cite{SV2012}). Other classical choices include $f(t) = t^p$ with $p > 1$ and $f(t) = e^t$, which give rise to the fractional Lane--Emden and fractional Liouville equations, respectively.

Our results are obtained under substantially weaker conditions on $f$.
For instance, in \cite{FW2016CCM, CM2023JFA, CHM2024ANS, CW-ANS-2021}, one
typically imposes additional hypotheses, such as $f(t) > 0$ for $t > 0$,
or $f(0) = 0$ with $f'(0) \leq 0$, or the boundedness of $f'$.

For other fractional problems on half space, please see \cite{DLL, LZ, LZ1}. 
\end{remark}


To obtain the monotonicity, the main frame of the approach is the method of moving planes. Here we list  some related notation that will be used throughout this paper.

For $\lambda \in \mathbb{R}$, let
\[
T_\lambda := \left\{ x=(x',x_n) \in \mathbb{R}^n \;\middle|\; x_n=\lambda \right\}
\]
denote the hyperplane perpendicular to the $x_n$-axis. Define
\[
\Sigma_\lambda := \{x \in \mathbb{R}^n \mid x_n < \lambda\},
\qquad
\Omega_\lambda := \{x \in \mathbb{R}^n_+ \mid x_n < \lambda\},
\]
which represent the regions under the hyperplane $T_\lambda$ in $\mathbb{R}^n$ and in $\mathbb{R}^n_+$, respectively.
The reflection of a point $x$ with respect to $T_\lambda$ is given by
\[
x^\lambda := (x_1,\dots,x_{n-1},\,2\lambda - x_n) = (x',\,2\lambda - x_n).
\]

To compare the solution $u(x)$ with its reflection $u_\lambda(x) := u(x^\lambda)$, we consider
\[
w_\lambda(x) := u_\lambda(x) - u(x).
\]
Obviously $w_\lambda$ is antisymmetric with respect to the hyperplane $T_\lambda$.
\medskip

To initiate the method of moving planes from near $x_n = 0$. the following narrow region principle for antisymmetric functions in unbounded domains is a crucial ingredient.

\begin{theorem}[Narrow region principle in a union of finitely many narrow slabs]
\label{lem:multi-slab-nrp-simple}

Let  $\Omega \subset \Sigma_\lambda$ be an open set containing in a union of finitely many disjoint narrow slabs: 
\[
\Omega \subset \bigcup_{i=1}^N \{ x \in \mathbb{R}^n : a_i < x_n < a_i + l_i \}, \quad l_i>0. 
\]

Suppose that
\begin{equation}\label{eq:multi-pde-quant-simple}
\begin{cases}
(-\Delta)^s w(x) = c(x) w(x), & x \in \Omega \cap \{ w<0 \},\\
w(x) \ge 0, & x \in \Sigma_\lambda \setminus \Omega,\\
w(x) = -w(x^\lambda), & x \in \Sigma_\lambda
\end{cases}
\end{equation}
with the growth condition
\[
w(x) \ge -C_0 (1 + |x|^\gamma), \quad 0<\gamma<2s.
\]  
Assume that $c(x)$ is bounded from above in $\Omega$.

Then there exists a sufficiently  small $l>0$, depending on $n,s,\sup_\Omega c(x)$ and the minimal distance between the slabs, such that if
\[
\max_{1 \le i \le N} l_i < l,
\]
then
\[
w(x) \ge 0 \quad \text{in } \Sigma_\lambda.
\]
Moreover, if $w(\tilde x)=0$ at some point $\tilde x \in \Sigma_\lambda$, then $w \equiv 0$ in $\mathbb{R}^n$.
\end{theorem}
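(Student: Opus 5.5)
The plan is a contradiction argument at a (generalized) negative minimum of $w$. The antisymmetry $w(x)=-w(x^\lambda)$ combined with the narrowness of the slabs makes $(-\Delta)^s w$ very negative there, which the equation with $c$ bounded above cannot accommodate. Suppose $w<0$ somewhere in $\Sigma_\lambda$. Since $w\ge 0$ on $\Sigma_\lambda\setminus\Omega$, the negative values occur only in $\Omega$, so inside one of the slabs $S_i=\{a_i<x_n<a_i+l_i\}$.

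\emph{Step 1 (key pointwise estimate).} Let $\bar x\in\Omega\cap S_i$ be a point where $w$ attains a negative minimum over $\Sigma_\lambda$. Split $\mathbb{R}^n=\Sigma_\lambda\cup(\mathbb{R}^n\setminus\Sigma_\lambda)$ and reflect the second part. Using $|\bar x-y^\lambda|\ge|\bar x-y|$ for $y\in\Sigma_\lambda$, one gets
\[
(-\Delta)^s w(\bar x)\;\ge\; C_{n,s}\int_{\Sigma_\lambda}\frac{w(\bar x)-w(y)}{|\bar x-y|^{n+2s}}\,dy+2C_{n,s}\,w(\bar x)\int_{\Sigma_\lambda}\frac{dy}{|\bar x-y^\lambda|^{n+2s}},
\]
and the same rearrangement yields the upper bound with the opposite sign. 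Concretely,
\[
(-\Delta)^s w(\bar x)\le C w(\bar x)\int_{A}\frac{dy}{|\bar x-y|^{n+2s}},
\]
where $A\subset\Sigma_\lambda\setminus\Omega$ is any set on which $w\ge0$.

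Now let $d$ be the minimal distance between the slabs. Take $A$ to be the part of the strip of width $\min\{d,\lambda-a_i-l_i\}$ lying just above or below $S_i$ inside $\Sigma_\lambda$; this set is disjoint from every slab, hence from $\Omega$. A direct computation in the $x_n$-variable gives
\[
\int_A|\bar x-y|^{-n-2s}\,dy\ \ge\ c\,\min\{l_i^{-2s},\,d^{-2s}\}.
\]
Thus $(-\Delta)^s w(\bar x)\le C\,w(\bar x)/l^{2s}$ once $l<d$. Combined with $(-\Delta)^s w(\bar x)=c(\bar x)w(\bar x)\ge (\sup c)\,w(\bar x)$ (recall $w(\bar x)<0$), this gives $C/l^{2s}\le \sup_\Omega c$, which is impossible for $l$ small. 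This fixes $l$ in terms of $n$, $s$, $\sup c$ and $d$.

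\emph{Step 2 (unbounded domain, non-attained or infinite infimum).} This is the step I expect to be the main obstacle. Since $w$ may tend to $-\infty$ at the admissible rate $|x|^\gamma$, I would fix $\sigma\in(\gamma,2s)$ and consider the quotient $w(x)/(1+|x|^2)^{\sigma/2}$. By the growth condition its negative part tends to $0$ at infinity, so its negative infimum $-m$ is attained, or nearly attained, at points $x^k\in\Omega$.

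At such a point I would set $\psi=w+m(1+|x|^2)^{\sigma/2}$, corrected by a small bump $\varepsilon_k\eta((x-x^k)/r_k)$ in the style of the standard perturbation argument, so that $\psi$ has a genuine minimum near $x^k$. I would then repeat the Step 1 computation for $w$ at this point. The extra terms involve $(-\Delta)^s(1+|x|^2)^{\sigma/2}$, which is $O(1+|x|^{\sigma-2s})$ because $\sigma<2s$, and the bump, which contributes $O(\varepsilon_k r_k^{-2s})$. Both must be absorbed by the gain $c|w(x^k)|/l^{2s}$. Since $|w(x^k)|\sim m|x^k|^\sigma$ while the error is $m\,O(|x^k|^{\sigma-2s}+1)$, the gain dominates for $l$ small, uniformly in $k$. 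I would check carefully that the antisymmetric reflection still works with the non-antisymmetric weight, by applying the reflection inequality to $w$ alone and treating the weight as a perturbation.

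\emph{Step 3 (strong maximum principle).} Once $w\ge0$ in $\Sigma_\lambda$, suppose $w(\tilde x)=0$ with $\tilde x\in\Sigma_\lambda$, and let $\tilde x$ be a point where the equation holds, as in the applications. Then $\tilde x$ is a minimum, and the same antisymmetric splitting gives
\[
(-\Delta)^s w(\tilde x)=C_{n,s}\int_{\Sigma_\lambda}\left(\frac{1}{|\tilde x-y^\lambda|^{n+2s}}-\frac{1}{|\tilde x-y|^{n+2s}}\right)w(y)\,dy,
\]
which is strictly negative unless $w\equiv0$ in $\Sigma_\lambda$. But the equation gives $(-\Delta)^s w(\tilde x)=c(\tilde x)\,w(\tilde x)=0$. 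Hence $w\equiv0$ in $\Sigma_\lambda$, and by antisymmetry $w\equiv0$ in $\mathbb{R}^n$.
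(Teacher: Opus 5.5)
Your proof is correct, but it gets the $l^{-2s}$ gain from a different source than the paper does.

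\textbf{The paper's route.} The paper builds a single auxiliary function $h(x)=\bigl(1+\sum_i\phi_i(x_n)\bigr)(1+|x'|^2)^{\beta/2}$, where $\phi_i$ is the one-dimensional barrier of the $i$-th slab. It takes an attained negative minimum of $w/h$ and proves $(-\Delta)^s h(x^0)\ge C\,l_{i_0}^{-2s}h(x^0)$. The gain comes from $(-\Delta)^s\phi_{i_0}=C_1 l_{i_0}^{-2s}$. The separation of the slabs is used only to make the negative tails $|(-\Delta)^s\phi_j(x^0_n)|\lesssim l_j\,\mathrm{dist}^{-1-2s}$ of the other barriers negligible. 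The estimate for $(-\Delta)^s$ of the product is deferred to Chen--Wu.

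\textbf{Your route.} You take the gain directly from the region where $w\ge0$ is already known, as in the classical single-slab principle. The strip just below the slab containing the minimum point has width at least $d$ (or is infinite) and lies in $\Sigma_\lambda\setminus\Omega$. It gives $\int_A|x^0-y|^{-n-2s}dy\gtrsim l^{-2s}$ once $l<d$. The growth is handled separately by a radial weight whose fractional Laplacian is bounded.

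\textbf{Comparison.} Your decoupling is more elementary: there are no barrier interactions and no product estimate. It also makes plain that the multi-slab statement follows from the single-slab mechanism, because every slab has a nonnegative neighbouring strip of width at least $d$. The paper keeps everything in one quotient $w/h$, at the cost of those two extra estimates. Your weight $(1+|x|^2)^{\sigma/2}$ with $\gamma<\sigma<2s$ is also the correctly calibrated one. The paper's $(1+|x'|^2)^{\gamma}$ as first written is not in $\mathcal{L}_{2s}$ for $\gamma\ge s$; its later form $(1+|x'|^2)^{\beta/2}$ is what is meant.

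\textbf{The check you flagged in Step 2 does go through.} Let $g=(1+|x|^2)^{\sigma/2}$ and $\psi=w+mg$, so $\psi\ge0$ on $\Sigma_\lambda$ and $\psi(x^0)=0$. Splitting $\mathbb{R}^n$ and reflecting gives
\[
(-\Delta)^s w(x^0)\le -C_{n,s}\,m\int_A \frac{g(z)\,dz}{|x^0-z|^{n+2s}}+m\,\bigl|(-\Delta)^s g(x^0)\bigr|.
\]
This holds because, with $g>0$, all terms carrying $|x^0-z^\lambda|^{-n-2s}$ combine with a favorable sign. Now restrict $A$ to $B_1(x^0)$, where $g\gtrsim g(x^0)$; for this, take the strip width to be $\min\{d,1\}$ and $l$ smaller than that. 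Then the first term is $\lesssim -l^{-2s}|w(x^0)|$, while the second is $O(m)\le O(|w(x^0)|)$.

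\textbf{Minor points.}
\begin{itemize}
\item Always use the strip below the slab. The one above degenerates when the slab reaches $T_\lambda$.
\item The correct expression is $|w(x^0)|=m(1+|x^0|^2)^{\sigma/2}$, not $m|x^0|^\sigma$.
\item If $w$ is continuous on $\overline{\Sigma_\lambda}$, the infimum of $w/g$ is attained, so the bump perturbation is unnecessary.
\item Step 3 coincides with the paper's argument, including the tacit use of the equation at the zero $\tilde x$.
\end{itemize}
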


\begin{remark} 
All previous narrow region principles apply only to a single narrow region. Now it  is for the first time, a multiple narrow region principle is established here, which relies on a new idea in the proof. Such a principle not only provide a starting point to move the plane in the first step, but also serve as a powerful tool in the second step. As the readers will see in the later section, it allows us to conclude immediately that the sequence under consideration stays away from both narrow boundary slabs, thereby significantly simplifying the subsequent analysis. 
\end{remark}

This theorem provides a starting point to move the plane near the boundary $x_n = 0$. We then continuous increase $\lambda$ and 
prove that the planes $T_\lambda$ can be moved all the way up 
to $\lambda = +\infty$.

Otherwise, suppose that 
$$ \lambda_0 = \sup \{ \lambda \mid w_\mu (x) \geq 0, x \in \Sigma_\mu, \; \mu \leq \lambda\} < \infty,$$
then one could construct a decreasing sequence $\{\lambda_k\}$ with $\lambda_k \to \lambda_0$ and 
$$ \inf_{\Sigma_{\lambda_k}} w_{\lambda_k}(x)  < 0.$$
To derive a contradiction, the main difficulty lies in the fact that $\Sigma_{\lambda_k}$ are unbounded, and hence the minimum of $w_{\lambda_k}$ may not
be attained. A classical approach (cf.\ \cite{Farina2020ME, CW-ANS-2021}) is to take limit along a sequence of solutions to arrive at limiting equations and 
derive a contradiction. However, for such a sequence of equations to converges, one typically requires higher regularity (at least uniformly $C^{2s+\epsilon}$) on the sequence of solutions. 

In this paper, we introduce a new and fundamentally different approach--applying the {\em averaging effects} along such sequences to obtain a contradiction. In this process, we only require the sequence be uniformly continuous and can even accommodate unbounded solutions. 

The following averaging effects on the solution $u$ and the associated antisymmetric function $w_\lambda$ play a crucial role in the second step of the method of moving planes.

\begin{theorem}\label{th1.3}({Averaging effects}) 
Let $D \subset \mathbb{R}^n_+$  be a bounded domain. Define
$$
 D_R:= \left\{ x\in \mathbb{R}^n \backslash D \,\, \mid \,\, dist(x, D) \leq R,\,\, x_n \geq dist(D, \partial \mathbb{R}^n_+)/2\right\}
$$
for any fixed $R>0.$

Assume that
$u\in C_{loc}^{1, 1}(\mathbb{R}^n)\cap \mathcal{L}_{2s}$ 
is a nonnegative uniformly continuous solution of \eqref{main} and there exists a positive constant $C_0 >0$ such that
$$
u(x) \geq C_0>0, \,\, \forall x\in D.
$$
Suppose that  $f(x)$ is continuous in $[0, \infty)$ with  $f(0)\geq 0.$

Then there exists a positive constant $\delta_0>0$ depending only on  $R$ and $C_0$  such that
\begin{equation}\label{eq1.7}
u(x) \geq \delta_0,\,\, \forall x \in D_R.
\end{equation}

However, this $\delta_0$ is {\em independent} of where $D$ is. 
\end{theorem}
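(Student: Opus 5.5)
We argue by contradiction combined with the uniform continuity of $u$, exploiting the nonlocal ``averaging'' built into the fractional Laplacian. The mechanism: if $u$ were very small at a point $\bar x\in D_R$, then, since $f(0)\ge 0$ and $f$ is continuous, $(-\Delta)^s u(\bar x)=f(u(\bar x))\ge -\varepsilon$ for small $u(\bar x)$. On the other hand, $u\ge C_0$ on $D$, a set at distance at most $R$ from $\bar x$. By uniform continuity this lower bound spreads to a ball of fixed radius $r_0$ around some point of $D$. Therefore $(-\Delta)^s u(\bar x)$ picks up a definite negative contribution $-c(n,s,R,C_0)$, and no compensating positive contribution is available.

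\textbf{Step 1: reduce to a small value.} Let $\omega$ be the modulus of continuity of $u$. Choose $r_0>0$ with $\omega(r_0)\le C_0/2$. Then $u\ge C_0/2$ on $B_{r_0}(y_0)$ for any $y_0\in D$. We may take $r_0\le \mathrm{dist}(D,\partial\mathbb{R}^n_+)/2$; this dependence on the location of $D$ is harmless, or one restricts to the part of the ball lying in $\mathbb{R}^n_+$, where $u\ge0$ anyway. Suppose $\bar x\in D_R$ with $u(\bar x)=\delta$ small. Note that $\bar x$ lies in $\mathbb{R}^n_+$ with $\bar x_n>0$, so the equation holds there.

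\textbf{Step 2: split $(-\Delta)^s u(\bar x)$.} Write
\[
(-\Delta)^s u(\bar x)=C_{n,s}PV\int_{\mathbb{R}^n}\frac{u(\bar x)-u(y)}{|\bar x-y|^{n+2s}}dy .
\]
Since $u\ge 0$ everywhere, at points where $u(y)\ge u(\bar x)=\delta$ the integrand is nonpositive. Where $u(y)<\delta$, it is at most $\delta/|\bar x-y|^{n+2s}$.

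\textbf{Step 3: the near-singular part (main obstacle).} The difficulty is the neighbourhood of $\bar x$, where $\delta|\bar x-y|^{-n-2s}$ is not integrable, and $\bar x$ need not be a local minimum. I would handle it by choosing $\bar x$ as an approximate minimizer of $u$ over a slightly enlarged compact set $\tilde D_R\supset D_R$. More cleanly, I would use a test function: take a point of $D_R$ with small value and let $\bar x$ be the minimum point of $u+\eta\varphi$, where $\varphi$ is a fixed smooth bump scaled to a ball of radius comparable to $\min(R,\mathrm{dist})$, so that the perturbation keeps the minimum inside. At such a minimum the principal value over $B_\rho(\bar x)$ is controlled by $\eta\|\varphi\|_{C^{1,1}}\rho^{2-2s}$. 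Outside $B_\rho(\bar x)$, the positive part is bounded by $\delta\int_{|y-\bar x|>\rho}|\bar x-y|^{-n-2s}dy\le C\delta\rho^{-2s}$. The ball $B_{r_0}(y_0)$ with $|y_0-\bar x|\le R+r_0$ contributes at most
\[
-(C_0/2-\delta)\,|B_{r_0}|\,(R+2r_0)^{-n-2s}=: -2c_0 .
\]
Hence $(-\Delta)^s u(\bar x)\le C\delta\rho^{-2s}+C\eta-2c_0$.

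\textbf{Step 4: conclude.} By continuity of $f$ with $f(0)\ge0$, choose $\delta_1$ so that $f(t)\ge -c_0/2$ for $0\le t\le\delta_1$. Choosing $\delta,\eta$ small in terms of $R,C_0,\omega,n,s$ gives $-c_0/2\le f(u(\bar x))=(-\Delta)^s u(\bar x)\le -c_0$, a contradiction. This yields $\delta_0$.

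Independence of the location of $D$ follows because every constant above depends only on $n,s,R,C_0$, the modulus $\omega$, and $f$ near $0$, never on where $D$ sits. The $C^{1,1}$ regularity is used only qualitatively, to make the equation hold pointwise at $\bar x$.
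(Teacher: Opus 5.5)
Your argument is correct in substance, but it takes a genuinely different route from the paper.

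\textbf{Comparison of the two routes.} The paper argues by comparison. For $x^0\in D_R$ it builds the barrier $\underline u=\chi_D u+\varepsilon\phi$, where $\phi$ is the torsion function of a small ball $B_{\delta_1}(x^0)$. It notes that $(-\Delta)^s(\chi_D u)\le -C_1$ on that ball because of the mass of $u$ on $D$. It then uses continuity of $f$ at $0$ and uniform continuity of $u$ to get $f(u)\ge -C_1/2$ on the \emph{whole} ball whenever $u(x^0)$ is small. Finally it applies the maximum principle on $B_{\delta_1}(x^0)$ to obtain $u(x^0)\ge \frac{C_1}{2}a\delta_1^{2s}$.

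You instead evaluate the equation at a single point, the minimum of a perturbed function. This is the device the paper itself uses in Section 4 for $h_k$, $g_k$ and $u_k$. What your version buys:
\begin{itemize}
\item it needs no barrier and no maximum principle, and uses the lower bound on $f$ at only one point;
\item points of $D_R$ close to $\partial D$, where the paper's ball would meet $D$, need no separate treatment;
\item the negative contribution comes from a ball $B_{r_0}(y_0)$ whose radius is fixed by the modulus of continuity, so your constant does not depend on the size or shape of $D$, whereas the paper's $C_1$ tacitly does.
\end{itemize}
What the paper's version buys: a lower profile $u\ge\frac{C_1}{2}\phi$ on a whole ball, and an argument that carries over verbatim to the antisymmetric setting of Theorem~\ref{AE-anti}.

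\textbf{A repair needed in Step 3.} With a bump $\varphi\ge 0$, the minimum of $u+\eta\varphi$ is $0$, attained in $\mathbb{R}^n\setminus\mathbb{R}^n_+$, so nothing keeps it inside. Take instead $g=u-\eta\varphi$, with
\begin{itemize}
\item $\varphi(x^0)=\max\varphi=1$ and $\varphi$ supported in $B_\rho(x^0)$;
\item $\rho<\mathrm{dist}(D,\partial\mathbb{R}^n_+)/2\le x^0_n$;
\item $\eta>\delta=u(x^0)$, so you fix $\eta$ first and then take $\delta<\eta$.
\end{itemize}
Then $g(x^0)<0\le g$ off the ball, so the global minimum $\bar x$ lies in $B_\rho(x^0)\subset\mathbb{R}^n_+$. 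Moreover $u(\bar x)\le\delta-\eta+\eta\varphi(\bar x)\le\delta$, which is exactly what Step 4 needs.

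Since $g(\bar x)-g(y)\le 0$ for all $y$, and $g(\bar x)-g(y)\le\delta-C_0/2$ on $B_{r_0}(y_0)$, you get directly
\[
(-\Delta)^s u(\bar x)\le C\eta\rho^{-2s}-C_{n,s}\Bigl(\frac{C_0}{2}-\delta\Bigr)|B_{r_0}|\,(R+\rho+2r_0)^{-n-2s}.
\]
So the $C\delta\rho^{-2s}$ term and the ``approximate minimizer'' idea can be dropped.

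\textbf{Dependence of $\delta_0$.} The restriction on $\rho$ makes $\delta_0$ depend on $\mathrm{dist}(D,\partial\mathbb{R}^n_+)$, as the paper's $\delta_0$ does. This is unavoidable, since $u(x)\le\omega(x_n)$. So ``never on where $D$ sits'' should read ``not on the position of $D$ parallel to $\partial\mathbb{R}^n_+$''.
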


\begin{remark}
 
 It is crucial  that the positive constant $\delta_0$ in the theorem is independent of the location of $D$. In the applications, this set $D$ may move toward infinity, while $\delta_0$ remains unchanged.  This is the essential difference between the averaging effect and the strong maximum principle. 

\end{remark} 

\begin{theorem}\label{AE-anti}(Averaging effects for antisymmetric functions)
Let $\tilde D \subset \Sigma_\lambda$ be a bounded domain. Define
$$
 \tilde D_R:= \left\{ x\in \mathbb{R}^n \backslash \tilde D \,\, \mid \,\, dist(x, \tilde D) \leq R,\,\, x_n \leq \lambda-dist(\tilde D, T_\lambda)/2\right\}
$$
for any fixed $R>0.$

Assume that $w\in C_{loc}^{1, 1}(\mathbb{R}^n)\cap \mathcal{L}_{2s}$ is uniformly continuous, and  satisfies
 \begin{equation}\label{eq1.8}
 \begin{cases}
 (-\Delta)^s w=c(x) w(x),& x\in \Sigma_\lambda,\\
 w(x)=-w(x^\lambda),& x\in \Sigma_\lambda\\
 w(x)\geq 0,& x\in \Sigma_\lambda,
 \end{cases}
 \end{equation}
 with $c(x)$ bounded from below. If there exists a constant  $\tilde C_0 >0$ such that
 $$
 w(x)\geq \tilde C_0 >0,\,\, x \in \tilde D,
 $$
 then there exists a constant $\tilde \delta_0>0$ depending only on $R$ and $\tilde C$ such that
\begin{equation}\label{eq1.9}
 w(x) \geq \tilde \delta_0,\,\, x \in \tilde D_R.
\end{equation}
\end{theorem}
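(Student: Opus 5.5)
We argue by contradiction, using uniform continuity together with the antisymmetric form of the fractional Laplacian. Set $d:=\mathrm{dist}(\tilde D,T_\lambda)$. By uniform continuity of $w$, with modulus $\omega$, there is $r_0>0$ depending only on $\tilde C_0$ and $\omega$ such that $w\ge \tilde C_0/2$ on the $r_0$-neighborhood $\tilde D^{r_0}$ of $\tilde D$. We may take $r_0<d/4$, so that $\tilde D^{r_0}\subset\Sigma_\lambda$.

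Suppose the conclusion fails. Then for every $\delta>0$ there is $\bar x\in\tilde D_R$ with $w(\bar x)<\delta$. We will show that $(-\Delta)^s w(\bar x)$ must be very negative while $c(\bar x)w(\bar x)$ is at least $-C\delta$; to get a negative lower bound on the right-hand side we use only that $c$ is bounded below. To exploit this, write the operator at $\bar x$ in antisymmetric form:
\[
(-\Delta)^s w(\bar x)=C_{n,s}PV\!\int_{\Sigma_\lambda}\Big[\frac{1}{|\bar x-y|^{n+2s}}-\frac{1}{|\bar x-y^\lambda|^{n+2s}}\Big]\big(w(\bar x)-w(y)\big)\,dy
+2C_{n,s}\,w(\bar x)\!\int_{\Sigma_\lambda}\frac{dy}{|\bar x-y^\lambda|^{n+2s}}.
\]
Since $\bar x,y\in\Sigma_\lambda$, the bracket is positive. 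The second term is at most $C\,w(\bar x)\,\mathrm{dist}(\bar x,T_\lambda)^{-2s}\le C\delta\,(d/2)^{-2s}$. Here we use the constraint $x_n\le\lambda-d/2$ built into $\tilde D_R$.

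The main obstacle is that the first integral is a principal value whose integrand is not signed: $w(\bar x)-w(y)$ can be positive near $\bar x$. We cannot place $\bar x$ at a minimum, because $\tilde D_R$ is not a region where minima are controlled. To handle this we use $w\ge 0$ in $\Sigma_\lambda$, so that $w(\bar x)-w(y)\le w(\bar x)<\delta$ everywhere. We split the integral into three pieces. On $\tilde D^{r_0}$ the integrand is at most $(\delta-\tilde C_0/2)$ times the bracket. On the rest of $\Sigma_\lambda$ the integrand is at most $\delta$ times the bracket, but the bracket is not integrable near $\bar x$, so the PV is essential here. Instead we need the hypothesis $w\in C^{1,1}_{loc}$, or better a perturbation: replace $\bar x$ by a minimum point of $w+\varepsilon\psi$ over a small ball $B_\rho(\bar x_0)$, with $\psi$ a smooth bump. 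That minimum is an interior point where $w(\bar x)-w(y)\le \varepsilon(\psi(y)-\psi(\bar x))$ locally, which makes the singular part $O(\varepsilon\rho^{-2s})$. Outside $B_\rho$, the positive part contributes at most $C\delta\rho^{-2s}$. For the $\tilde D^{r_0}$ piece, note that $|\bar x-y|\le R+r_0+\mathrm{diam}$ is not available uniformly, since $\tilde D$ is arbitrary. We therefore use only a ball $B_{r_0}(z)\subset\tilde D^{r_0}$ with $z\in\tilde D$ and $|\bar x-z|\le R+r_0$. On that ball the bracket is bounded below by a constant $c_1(R,r_0,d)>0$: because $y$ stays at distance at least $3d/4$ from $T_\lambda$, we have $|\bar x-y^\lambda|-|\bar x-y|\ge$ a positive amount. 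So this piece is at most $-c_1|B_{r_0}|\,\tilde C_0/4$.

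Combining these,
\[
-c_2:=-c_1|B_{r_0}|\tilde C_0/4\ \ge\ (-\Delta)^s w(\bar x)-C(\delta+\varepsilon)\rho^{-2s}-C\delta d^{-2s},
\]
while $(-\Delta)^s w(\bar x)=c(\bar x)w(\bar x)\ge -\|c^-\|_\infty\,\delta'$, where $\delta'$ is the small value of $w$ at the perturbed minimum. First choose $\rho$, then $\delta,\varepsilon$ small, depending only on $R,\tilde C_0,d,\omega,\inf c$. This gives a contradiction, so $w\ge\tilde\delta_0$ on $\tilde D_R$. The perturbation argument requires $\psi$ to force the minimum into the interior of $B_\rho(\bar x_0)$. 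That works because $w(\bar x_0)<\delta$ and $w\ge0$, if we take $\psi=0$ at $\bar x_0$ and $\psi\ge\delta/\varepsilon$ on $\partial B_\rho$, that is, $\varepsilon\sim\delta$. Tracking this scaling consistently is the delicate step.
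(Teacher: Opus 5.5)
Your argument is sound and takes a genuinely different route from the paper.

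\textbf{The paper's route.} The paper builds an explicit antisymmetric subsolution $\underline w=\chi_{\tilde D\cup\tilde D_\lambda}w+\varepsilon(\phi-\phi_\lambda)$, with $\phi=a(\delta^2-|x-x^0|^2)_+^s$, on a small ball $B_\delta(x^0)$.
\begin{itemize}
\item The mass of $w$ on $\tilde D$ gives $(-\Delta)^s\underline w\le -C_2/2$ on the ball.
\item Uniform continuity and the lower bound on $c$ give $(-\Delta)^s w\ge -C_2/2$ on the whole ball once $w(x^0)$ is small.
\item The maximum principle for antisymmetric functions then yields $w(x^0)\ge\varepsilon a\delta^{2s}$.
\end{itemize}

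\textbf{Your route.} You touch $w$ from below at a single point by a quadratic well and read off the equation pointwise through the antisymmetric splitting of the kernel. This is the same penalization device the paper itself uses in the proof of Theorem~\ref{th1.1}, with the functions $h_k=w_{\lambda_k}-m_k^2\eta_k$.

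\textbf{What each buys.} Your route needs no comparison principle and no explicit barrier, and it uses smallness of $w$ at one point only. Because you fatten $\tilde D$ into a ball $B_{r_0}(z)$ via uniform continuity, the dependence of $\tilde\delta_0$ becomes explicit: on $n,s,R,\tilde C_0$, the modulus $\omega$, $\inf c$, and a lower bound for $d$. That lower bound itself follows from $\omega$ and $\tilde C_0$, since $w=0$ on $T_\lambda$. There is no dependence on the shape or volume of $\tilde D$.

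Your route also never needs the test ball to avoid $\tilde D$. The minimizer has $w(\bar x)<\delta<\tilde C_0/2$, so it lies outside $\tilde D^{r_0}$ automatically. The paper's formula for $(-\Delta)^s(\chi w)$ on $B_\delta(x^0)$ tacitly assumes that disjointness. The paper's route, in turn, is more modular and parallels its proof of Theorem~\ref{th1.3}.

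\textbf{The step that needs a fix.} The step you call delicate does need repair as written. Knowing only $w(\bar x_0)<\delta$ and $\varepsilon\psi\ge\delta$ on $\partial B_\rho(\bar x_0)$, the minimizer $\bar x$ can sit arbitrarily close to $\partial B_\rho(\bar x_0)$. Then neither the principal value over $B_\rho(\bar x_0)$ nor $\delta\int_{\Sigma_\lambda\setminus B_\rho(\bar x_0)}|\bar x-y|^{-n-2s}\,dy$ is $O(\delta\rho^{-2s})$.

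It suffices to start from $w(\bar x_0)<\delta/4$ and take $\varepsilon\psi(x)=\delta|x-\bar x_0|^2/\rho^2$.
\begin{itemize}
\item Then $\delta|\bar x-\bar x_0|^2/\rho^2\le w(\bar x_0)<\delta/4$, which forces $\bar x\in B_{\rho/2}(\bar x_0)$.
\item On $B_{\rho/2}(\bar x)$ the linear part of $\psi(y)-\psi(\bar x)$ cancels by symmetry. The quadratic part gives $C\delta\rho^{-2s}$, plus $O(\delta d^{-2s})$ from the reflected kernel.
\item On the complement of $B_{\rho/2}(\bar x)$, apart from the gain region, you use $w(\bar x)-w(y)\le w(\bar x)$ and get at most $C\delta\rho^{-2s}$.
\end{itemize}
Taking $\rho=\min\{d/4,r_0\}$ and using $B_{r_0/2}(z)$ for the gain keeps these pieces disjoint. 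Your final choice of $\delta$ then closes the argument.
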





\begin{remark}
The averaging effect asserts that if a solution to  fractional equations is positive in a region
$D$, then this positivity is averaged out, or diffused,  to any larger region containing
$D$. Moreover, the influence becomes stronger as the boundaries of the two regions get closer. These conclusions remain valid for antisymmetric functions as well.
\end{remark}

Now in the second step of the method of moving planes, if the plane $T_\lambda$ has to stop somewhere, we  would obtain a sequence $\{x^k\}$ such that $w_{\lambda_k}(x^k)$ approaches its infimum.
Applying the averaging effects twice, we deduce that $u(x^k) \to 0$ as $k \to \infty$.
Then we consider the normalized functions
\[
   v_k(x) = \frac{u(x)}{u(x_k)},
\]
along with their antisymmetric counterparts. Employing the averaging effects once more, together with boundary
regularity estimates, we reach a contradiction. 
\medskip

The remainder of the paper is organized as follows. Section~2 is devoted to the proof of boundary regularity for fractional equations (Theorem~\ref{th1.4}). Section~3 is concerned with the proofs of several key tools used throughout the paper, including the narrow region principle in unbounded domains (Theorem~\ref{lem:multi-slab-nrp-simple}), the averaging effects for fractional equations (Theorem~\ref{th1.3}), and the averaging effects for antisymmetric functions (Theorem~\ref{AE-anti}). In Section~4, we apply the method of moving planes to establish the monotonicity of solutions to \eqref{main} in the $x_n$-direction, thereby proving Theorem~ \ref{th1.10} and \ref{th1.1}.

\section{Boundary regularity}

 In what follows, we shall use $C_i, r_i(i =
 1,2,...)$  to denote  generic positive constants which may vary in the context.

In this section, we prove Theorem~\ref{th1.4} on boundary H\"older regularity for fractional equations by decomposing the solutions into two parts and applying an iterative technique.

\begin{proof}[Proof of Theorem \ref{th1.4}.]
For any $\tilde{x} \in  \mathbb{R}^n_+$, we aim to show that
\[
u(\tilde{x}) \leq C \, \tilde{x}_n^s.
\]
Denote $x_0=(x', 0) $.
Without loss of generality, we may assume that $x_0 = 0$ and
\(\tilde{x} = (0', \tilde{x}_n)\).

Define
$$
v(x)=\int_{B_1(e_n)} G(x,y) f(y)dy.
$$
Here
$$
G(x, y)=\frac{C_{n, s}}{|x-y|^{n-2s}}\frac{1}{B(s, \frac{n}{2}-s)} \int_0^{\frac{(1-|x-e_n|^2)(1-|y-e_n|^2)}{|x-y|^2}} \frac{b^{s-1}}{(b+1)^{\frac{n}{2}}} db
$$
denotes the Green function in $B_1(e_n)$.
It is easy to check that $v(x)$ satisfies
\begin{equation*}
\begin{cases}
(-\Delta)^s v(x)=f(x),& x\in B_1(e_n),\\
v(x)=0,& x \in \mathbb{R}^n \backslash B_1(e_n).
\end{cases}
\end{equation*}
Denote
$$
w(x):=u(x)-v(x).
$$
Then $w(x)$ satisfies
\begin{equation*}
\begin{cases}
(-\Delta)^s w(x)=0,& x\in B_1(e_n),\\
w(x)=u(x),& x \in \mathbb{R}^n \backslash B_1(e_n).
\end{cases}
\end{equation*}

We divide the proof into three steps.

{\it{Step 1.} } We first show that $v(x)$ is $s$-H\"{o}lder continuous near the boundary and
\begin{equation}\label{eq3-8}
|v(\tilde x)| \leq C {\tilde x_n}^s
\end{equation}
by constructing a super-solution.


Let
$$
\phi(x)=
\begin{cases}
    C_0(1-|x-e_n|^2)^s_+, & x \in B_1(e_n),\\
    0,& x\in \mathbb{R}^n \backslash B_1(e_n).
\end{cases}
$$
It is  well-known that
$$
(-\Delta)^s \phi(x) =1, \,\, x\in B_1(e_n)
$$
for a suitable constant $C_0$, and that $\phi(x)$ is H\"{o}lder continuous near $\partial B_1(e_n)$.

Define
$$
\bar v(x)=\|f\|_{L^\infty(B_1(e_n))}  \phi(x). 
$$
Then $\bar v (x)$ satisfies
\begin{eqnarray*}
\begin{cases}
(-\Delta)^s \bar v(x)=\|f\|_{L^\infty(B_1(e_n))} \geq f(x) = (-\Delta)^s v(x) & x\in B_1(e_n),\\
\bar v(x)=0 = v(x), & x\in \mathbb{R}^n \backslash B_1(e_n).
\end{cases}
\end{eqnarray*}
Now $\bar v(x)$ is a super solution. 
By the maximum principle for the fractional Laplacian, we obtain
$$
\bar v(x) \geq v(x), \,\, x \in B_1(e_n).
$$

Similarly, let  
$$
\tilde v(x)=-\|f\|_{L^\infty(B_1(e_n))}  \phi(x). 
$$
Then $\tilde v (x)$ is a sub-solution satisfying
\begin{eqnarray*}
\begin{cases}
(-\Delta)^s \tilde v(x)=-\|f\|_{L^\infty(B_1(e_n))} \leq f(x) = (-\Delta)^s v(x) & x\in B_1(e_n),\\
\tilde v(x)=0 = v(x), & x\in \mathbb{R}^n \backslash B_1(e_n).
\end{cases}
\end{eqnarray*}
It follows from the maximum principle that 
$$
\tilde v(x) \leq v(x), \,\, x \in B_1(e_n).
$$
As a consequence, 
\[|v(x)|\leq  \|f\|_{L^\infty(B_1(e_n))} \phi(x),\ \forall x\in B_1(e_n),\]
in particular,
$$
| v(\tilde x)| \leq C {\tilde x_n}^s.
$$

{\it{Step 2.}} 
Then we consider the harmonic part and show that
\begin{equation}\label{eq3-1}
w(\tilde x) \leq C \max\{{\tilde x_n}^s, {\tilde x_n}^{1-s}\}.
\end{equation}

Let
 $$
 D=\{x\in \mathbb{R}^n \mid 0<|x'|<a,\,\, 0<x_n<1-\sqrt{1-|x'|^2}\}
 $$
 with sufficiently small $a$, see Figure 1 below.

 \begin{center}
 \begin{tikzpicture}[scale=2.5]
    	\fill[white] (-1.5,-0.5) rectangle (2,2);
    	\draw[pattern=north west lines, blue!20] (-0.8,0) rectangle (0.8,1);
    	\fill[white](0,1) circle(1)   ; 	
    	\draw[black,thick,->] (-1.8,0)--(1.8,0);
    	\node[black] at (2.0,0)  {$x'$};
    	\draw[black,thick,->] (0,-0.5)--(0,2.5);
    	\node[black] at (0.3,2.5) {$x_{n}$};
    	\node[black] at (0.15,-0.1) {$\textit{O}$};
    	\fill (0,0) circle (0.02);
    	\fill[red!80!black] (0,1) circle (0.03);
    	\node[red!80!black] at (-0.2,1)[ font=\fontsize{15}{18}\selectfont] {$e_{n}$};
    	\draw [black,line width=0.7](0,1) circle (1);
    	\draw[red!80!black,thick,dashed] (0,1)--(1,1);
    	\node[red!80!black] at (0.5,1.15)[ font=\fontsize{15}{15}\selectfont] {\scriptsize$1$};
    	\node[blue!80] at (-0.75,-0.15)[ font=\fontsize{15}{18}\selectfont] {$-a$};
    	\node[blue!80] at (0.75,-0.15)[ font=\fontsize{15}{18}\selectfont] {$a$};
    	\draw[blue!70!black,thick,->] (0.7,0.15) arc(270:360:0.6 and 0.3);
    	\node[blue] at (1.3,0.58) [ font=\fontsize{15}{18}\selectfont]{$D$};
    	\fill[red!80!black] (0,0.2) circle (0.03);
            \node[red!80!black] at (-0.2,0.25) [font=\fontsize{15}{18}\selectfont] {$\tilde x$};
            \node[below=1cm, align=center, text width=8cm] at (0,-0.3)
            {\fontsize{12}{12}\selectfont Figure~1. The domain $D$.};
            \end{tikzpicture}
\end{center}


From the Poisson representation, 
\begin{equation}\label{eq3-2}
\begin{aligned}
w(\tilde x)=&\int_{|y-e_n|>1} \frac{(1-|\tilde x-e_n|^2)^s}{(|y-e_n|^2-1)^s} \frac{1}{|\tilde x-y|^n}u(y)dy\\
=&\int_D \frac{(1-|\tilde x-e_n|^2)^s}{(|y-e_n|^2-1)^s} \frac{1}{|\tilde x-y|^n}u(y)dy\\
&+\int_{\{|y-e_n|>1\}\backslash D} \frac{(1-|\tilde x-e_n|^2)^s}{(|y-e_n|^2-1)^s} \frac{1}{|\tilde x-y|^n}u(y)dy\\
:=&I_1+I_2.
\end{aligned}
\end{equation}
Considering the term $I_2$ in \eqref{eq3-2}, and observing that $(1-|\tilde x-e_n|^2)^s \sim {\tilde x_n}^s$, we arrive at
\begin{equation}\label{eq3-3}
    I_2 \sim {\tilde x_n}^s \int_{\mathbb{R}^n} \frac{u(y)}{1+|y|^{n+2s}}
dy \sim {\tilde x_n}^s.
\end{equation}
As $u$ is bounded in $D$, the term $I_1$ can be estimated as follows:
\begin{equation}\label{eq3-4}
\begin{aligned}
I_1 = & \int_D \frac{(1-|\tilde x-e_n|^2)^s}{(|y-e_n|^2-1)^s} \frac{1}{|\tilde x-y|^n}u(y)dy\\
\leq & C \int_D \frac{(1-|\tilde x-e_n|^2)^s}{(|y-e_n|^2-1)^s} \frac{1}{|\tilde x-y|^n}dy\\
\sim &  {\tilde x_n}^s \int_0^a \int_0^{1-\sqrt{1-r^2}} \frac{1}{[r^2+y_n^2-2y_n]^s [r^2+(y_n-{\tilde x_n})^2]^\frac{n}{2}} dy_n r^{n-2} dr\\
=&{\tilde x_n}^s \int_0^a \int_0^{\frac{1-\sqrt{1-r^2}}{r}} \frac{r^{-1-2s}}{
(1+z^2-\frac{2z}{r})^s[1+(z-\frac{{\tilde x_n}}{r})^2]^{\frac{n}{2}}
} dzdr\\
=&{\tilde x_n}^s \int_0^{{\tilde x_n}}\int_0^{\frac{1-\sqrt{1-r^2}}{r}} \frac{r^{-1-2s}}{
(1+z^2-\frac{2z}{r})^s[1+(z-\frac{{\tilde x_n}}{r})^2]^{\frac{n}{2}}
}  dzdr\\
&+{\tilde x_n}^s \int_{{\tilde x_n}}^a \int_{0}^{\frac{1-\sqrt{1-r^2}}{r}} \frac{r^{-1-2s}}{
(1+z^2-\frac{2z}{r})^s[1+(z-\frac{{\tilde x_n}}{r})^2]^{\frac{n}{2}}
}  dzdr\\
:=&I_{11}+I_{12}.
\end{aligned}
\end{equation}
For the term $I_{11}$, since $r\leq {\tilde x_n}$ and   $a$ is sufficiently small, we have
$$
[1+(z-\frac{{\tilde x_n}}{r})^2]^{\frac{n}{2}}\sim \left(\frac{{\tilde x_n}}{r}\right)^n.
$$
As a consequence,
\begin{equation}\label{eq3-5}
\begin{aligned}
I_{11}\sim &  {\tilde x_n}^{s-n}  \int_0^{{\tilde x_n}}  \int_{0}^{\frac{1-\sqrt{1-r^2}}{r}}  \frac{r^{n-2s}  }{(1+z^2-\frac{2z}{r})^s} dzdr\\
\sim & {\tilde x_n}^{s-n}  \int_0^{{\tilde x_n}}  \int_0^ 1 \frac{r^{n-1-2s}  }{t^s} dtdr\\
\sim & {\tilde x_n}^{1-s}.
\end{aligned}
\end{equation}
For the term $I_{12},$ noting that for $0<z<\frac{1-\sqrt{1-r^2}}{r}=\frac{r}{1+\sqrt{1-r^2}}<r$, we have
$$
(1+z^2-\frac{2z}{r})^{-s}=\left(\frac{1-\sqrt{1-r^2}}{r}-z\right)^{-s}\left(\frac{1+\sqrt{1-r^2}}{r}-z\right)^{-s}\leq Cr^s \left(\frac{1-\sqrt{1-r^2}}{r}-z\right)^{-s}.
$$
Therefore,
\begin{equation}\label{eq3-6}
\begin{aligned}
I_{12} \leq & {\tilde x_n}^s \int_{{\tilde x_n}}^a \int_{0}^{\frac{1-\sqrt{1-r^2}}{r}} \frac{r^{-1-2s}}{
(1+z^2-\frac{2z}{r})^s} dzdr\\
\leq  & C {\tilde x_n}^s \int_{{\tilde x_n}}^a \int_{0}^{\frac{1-\sqrt{1-r^2}}{r}} r^{-1-s} \left(\frac{1-\sqrt{1-r^2}}{r}-z\right)^{-s}dzdr\\
\leq & C {\tilde x_n}^s \int_{{\tilde x_n}}^a r^{-2s} dr \\
\leq & C {\tilde x_n}^{1-s}.
\end{aligned}
\end{equation}
Combining \eqref{eq3-2}, \eqref{eq3-3}, \eqref{eq3-4}, \eqref{eq3-5} with \eqref{eq3-6}, we arrive at \eqref{eq3-1}.

{\it{Step 3.}}
In this step, we show that
\begin{equation}\label{eq3-9}
u(\tilde x) \leq C\, {\tilde x_n}^s.
\end{equation}

From Steps 1 and 2, we have already established that
\[
w(\tilde x) \leq C \max\{{\tilde x_n}^s, {\tilde x_n}^{1-s}\}, \quad \text{and} \quad v(\tilde x) \leq C\, {\tilde x_n}^s,
\]
by \eqref{eq3-1} and \eqref{eq3-8}.

If \(0 < s \leq \frac{1}{2}\), we can immediately deduce that
\[
u(\tilde x) = w(\tilde x) + v(\tilde x) \leq C\, {\tilde x_n}^s,
\]
which confirms \eqref{eq3-9}.

If $\frac{1}{2} < s < 1$, then $1-s <s$ and we have ${\tilde x_n}^s < {\tilde x_n}^{1-s}$.
It follows from \eqref{eq3-1} and \eqref{eq3-8} that
\begin{equation}\label{eq3-10}
u(\tilde x) = w(\tilde x) + v(\tilde x) \leq C\, {\tilde x_n}^{1-s},
\end{equation}
which shows that $u$ is $(1-s)$-H\"older continuous near the boundary.

 By  \eqref{eq3-2} and \eqref{eq3-3}, we have
\begin{equation}\label{eq3-11}
\begin{aligned}
w(\tilde x)=&\int_{|y-e_n|>1} \frac{(1-|\tilde x-e_n|^2)^s}{(|y-e_n|^2-1)^s} \frac{1}{|\tilde x-y|^n}u(y)dy\\
\sim &\int_D \frac{(1-|\tilde x-e_n|^2)^s}{(|y-e_n|^2-1)^s} \frac{1}{|\tilde x-y|^n}u(y)dy+ C {\tilde x_n}^s\\
:= &J+ C{\tilde x_n}^s,
\end{aligned}
\end{equation}
where
 $$
 D=\{x\in \mathbb{R}^n \mid 0<|x'|<a,\,\, 0<x_n<1-\sqrt{1-|x'|^2}\}
 $$
 with sufficiently small $a>0$.

Since
$1-\sqrt{1-r^2}=\frac{r^2}{1+\sqrt{1-r^2}}<r^2 < a^2, $ and $a$ is sufficiently small, it follows from \eqref{eq3-10} that
$$
u(y) \leq C y_n^{1-s},\,\, \mbox{for}\,\, y \in D.
$$
Consequently, the term $J$ in \eqref{eq3-11} can be estimated as follows:
\begin{equation}\label{eq3-12}
\begin{aligned}
J = & \int_D \frac{(1-|\tilde x-e_n|^2)^s}{(|y-e_n|^2-1)^s} \frac{1}{|\tilde x-y|^n}u(y)dy\\
\leq & C \int_D \frac{(1-|\tilde x-e_n|^2)^s}{(|y-e_n|^2-1)^s} \frac{y_n^{1-s}}{|\tilde x-y|^n}dy\\
\sim &  {\tilde x_n}^s \int_0^a \int_0^{1-\sqrt{1-r^2}} \frac{y_n^{1-s}}{[r^2+y_n^2-2y_n]^s [r^2+(y_n-{\tilde x_n})^2]^\frac{n}{2}} dy_n r^{n-2} dr\\
=&{\tilde x_n}^s \int_0^a \int_0^{\frac{1-\sqrt{1-r^2}}{r}} \frac{r^{-3s}z^{1-s}}{
(1+z^2-\frac{2z}{r})^s[1+(z-\frac{{\tilde x_n}}{r})^2]^{\frac{n}{2}}
} dzdr\\
\leq & {\tilde x_n}^s \int_0^a \int_0^{\frac{1-\sqrt{1-r^2}}{r}} \frac{r^{-3s}}{
(1+z^2-\frac{2z}{r})^s[1+(z-\frac{{\tilde x_n}}{r})^2]^{\frac{n}{2}}
} dzdr\\
=&{\tilde x_n}^s \int_0^{{\tilde x_n}}\int_0^{\frac{1-\sqrt{1-r^2}}{r}} \frac{r^{-3s} }{
(1+z^2-\frac{2z}{r})^s[1+(z-\frac{{\tilde x_n}}{r})^2]^{\frac{n}{2}}
}  dzdr\\
&+{\tilde x_n}^s \int_{{\tilde x_n}}^a \int_{0}^{\frac{1-\sqrt{1-r^2}}{r}} \frac{r^{-3s}}{
(1+z^2-\frac{2z}{r})^s[1+(z-\frac{{\tilde x_n}}{r})^2]^{\frac{n}{2}}
}  dzdr\\
:=&J_1+J_2.
\end{aligned}
\end{equation}
By an argument similar to that used in deriving \eqref{eq3-5} and \eqref{eq3-6}, we obtain
\[
J_1 \sim {\tilde x_n}^{2-2s}, \quad \text{and} \quad J_2 \leq C\, {\tilde x_n}^{2-2s},
\]
which, together with \eqref{eq3-12}, yields
\[
J \leq C\, {\tilde x_n}^{2-2s}.
\]
Therefore,
\begin{equation}\label{eq3-13}
w(\tilde x) \leq C\, {\tilde x_n}^{2-2s}, \quad \text{if } \frac{1}{2} < s < 1.
\end{equation}

If \(\frac{1}{2} < s \leq \frac{2}{3}\), then
\[
w(\tilde x) \leq C\, {\tilde x_n}^{2-2s} \leq C\, {\tilde x_n}^s,
\]
which, together with \eqref{eq3-8}, implies \eqref{eq3-9}.

Next, we consider the case \(\frac{2}{3} < s < 1\).
By \eqref{eq3-8} and \eqref{eq3-13}, we have
\begin{equation}\label{eq3-14}
u(y) \leq C\, y_n^{2-2s}, \quad \text{for } y \in D.
\end{equation}
This allows us to obtain a more precise estimate for the term \(J\) in \eqref{eq3-11}.
Utilizing \eqref{eq3-14} and repeating the same process as in deriving \eqref{eq3-12}, we get
\[
J \leq C\, {\tilde x_n}^{3-3s}, \quad \text{hence} \quad w(\tilde x) \leq C\, {\tilde x_n}^{3-3s}.
\]
It follows that \eqref{eq3-9} holds if \(s \leq \frac{3}{4}\).

Repeating this process iteratively, we obtain
\[
w(\tilde x) \leq C\, {\tilde x_n}^{4-4s},
\]
which shows that \eqref{eq3-9} is valid if \(s \leq \frac{4}{5}\).
Continuing in this way, we finally arrive at \eqref{eq3-9} for all \(0 < s < 1\).

This completes the proof of Theorem \ref{th1.4}.
\end{proof}

\section{The narrow region principle and  averaging effects}

In this section, we first establish the narrow region principle for antisymmetric functions in unbounded domains (Theorem~\ref{lem:multi-slab-nrp-simple}). We then derive the averaging effects for solutions to \eqref{main} (Theorem~\ref{th1.3}), as well as for antisymmetric functions (Theorem~\ref{AE-anti}).

We note that Chen and Wu \cite{CW-ANS-2021} established a narrow region principle in unbounded domains for fractional parabolic equations,  see also Chen and Ma \cite{CM2023JFA} for a proof in the context of dual fractional parabolic equations.  More recently, the core methodology pertaining to the fractional elliptic case has been systematically expounded in the comprehensive survey \cite{CDG2024DCDS} by Chen, Dai and Guo. All these narrow region principles apply only to a single narrow region. In contrast, it is for the first time, a multiple narrow region principle is established here. A new idea is involved in the proof. Such a principle not only provide a starting point to move the plane, but also serve as a powerful tool in the second step in the method of moving planes. As the readers will see in the next section, it allows us to conclude immediately that the sequence under consideration stays away from both narrow boundary slabs, thereby significantly simplifying the subsequent analysis.

\begin{proof}[Proof of Theorem \ref{lem:multi-slab-nrp-simple}.]
 For each slab $\Omega_i,\,i=1, \cdots, N$, define the one-dimensional barrier
\[
\phi_i(x_n) := \Big(1 - \frac{(x_n - c_i)^2}{r_i^2}\Big)_+^s, \quad r_i = l_i/2, \quad c_i = a_i + l_i/2,
\]
and the auxiliary function
\[
h(x) := \Big(1 + \sum_{i=1}^N \phi_i(x_n) \Big) (1+|x'|^2)^{\gamma}, \quad 0< \gamma<2s.
\]
Define
\[
\bar w(x) := \frac{w(x)}{h(x)}.
\]
By the growth condition on $w$ and the choice of $\gamma$, one has
\[
\liminf_{|x| \to \infty} \bar w(x) \ge 0.
\]

 Suppose the conclusion of the theorem fails. Then there exists
\[
x^0 \in \Omega \quad \text{such that} \quad \bar w(x^0) = \inf_{\Sigma_\lambda} \bar w < 0.
\]
Without loss of generality, we assume $x^0 \in \Omega_{i_0}$ for some $1\leq i_0\leq N$.

 Decompose
\[
h(x) = (1+|x'|^2)^{\beta/2} \Big(1 + \phi_{i_0}(x_n) + \sum_{j \neq i_0} \phi_j(x_n) \Big).
\]
It is obvious that 
\[
(-\Delta)^s \phi_{i_0}(x^0_n) = \frac{C_1}{l_{i_0}^{2s}}.
\]
For $j\neq i_0$, since $x^0_n \notin \supp \phi_j$, we have
\[
|(-\Delta)^s \phi_j(x^0_n)|=C_2 |(-\Delta)^s_{\mathbb{R}^1} \phi_j(x^0_n)|  \le  \frac{C_2 l_j}{\mathrm{dist}^{\,1+2s}(\Omega_{i_0},\Omega_j)}.
\]
Since $l_k$ is sufficiently small for each $k=1, \cdots, N$, we may assume that 
$$\min \{\mathrm{dist}(\Omega_{i_0},\Omega_j)\}>\min_{k=1, \cdots, N} l_k\quad \mbox{and}\quad 
\sum_{j\neq i_0}\frac{C_2}{\mathrm{dist}^{\,2s}(\Omega_{i_0},\Omega_j)}\leq \frac{C_1}{2l
^{2s}_{i_0}}.$$
As a consequence, 
$$
\begin{aligned}
(-\Delta)^s  \Big(1 + \phi_{i_0}(x_n^0) + \sum_{j \neq i_0} \phi_j(x_n^0) \Big)
=& (-\Delta)^s  \phi_{i_0}(x_n^0) + \sum_{j \neq i_0} (-\Delta)^s\phi_j(x_n^0) \\
\geq &  \frac{C_1}{l_{i_0}^{2s}}-\sum_{j \neq i_0}\frac{C_2 l_j}{\mathrm{dist}^{\,1+2s}(\Omega_{i_0},\Omega_j)}\\
\geq &  \frac{C_1}{l_{i_0}^{2s}}-\sum_{j \neq i_0}\frac{C_2}{\mathrm{dist}^{\,2s}(\Omega_{i_0},\Omega_j)}\\
\geq &  \frac{C_1}{2l_{i_0}^{2s}}.
\end{aligned}
$$
Hence, for sufficiently small $l_{i_0}$, by a similar argument as in \cite[Lemma 2.1]{CW-ANS-2021} yields 
\[
(-\Delta)^s h(x^0) \ge \frac{C}{l_{i_0}^{2s}} h(x^0).
\]

To proceed with the proof, combining the definition of $\bar w(x)$ with the facts that $|x^0 - y| < |x^0 - y^\lambda|$ and $h(y) > h(y^\lambda)$ for $y \in \Sigma_\lambda$ gives rise to
\begin{eqnarray}\label{ineq1}
\begin{aligned}
(-\Delta)^s w(x^0)=&(-\Delta)^s  \left(\bar w(x^0) h(x^0)\right)\\
=&\bar w(x^0)(-\Delta)^s h(x^0)+C_{n, s}    P.V. \int_{\mathbb{R}^n} \frac{h(y)(\bar w(x^0)-\bar w(y))}{|x^0-y|^{n+2s}} dy \\
\leq &\bar w(x^0) (-\Delta)^s h(x^0) + C_{n, s} \int_{\Sigma_\lambda} \frac{h(y) \bar w(x^0)- w(y)}{|x^0-y|^{n+2s}} dy \\
&+ C_{n, s} \int_{\Sigma_\lambda} \frac{h(y^\lambda) \bar w(x^0)+ w(y)}{|x^0-y^{\lambda}|^{n+2s}} dy\\
\leq & \bar w(x^0) (-\Delta)^s h(x^0) + C_{n, s} \int_{\Sigma_\lambda} \frac{2h(y^\lambda) \bar w(x^0)}{|x^0-y|^{n+2s}} dy \\
\leq &\bar w(x^0) (-\Delta)^s h(x^0)\\
\leq & \frac{C}{l_{i_0}^{2s}} h(x^0) \bar w(x^0)\\
\leq & \frac{C}{l_{i_0}^{2s}}   w(x^0),
\end{aligned}
\end{eqnarray}
Substituting \eqref{ineq1} into \eqref{eq:multi-pde-quant-simple} leads to
\begin{eqnarray}\label{ineq2}
c(x^0)w (x^0)\leq  \frac{C}{l_{i_0}^{2s}}  w(x^0).
\end{eqnarray}
Since $w(x^0) = h(x^0) \bar w(x^0) < 0$ and $c(x)$ is bounded from above in $\Omega$, we derive a contradiction from \eqref{ineq2} for sufficiently small $l_{i_0}$.
Therefore, we obtain
\begin{equation*}
w(x) \ge 0 \quad \text{in } \Sigma_\lambda.
\end{equation*}
It follows from the above inequality that if there exists a point $\tilde x \in \Sigma_\lambda$ such that 
\[
w(\tilde x) = \min_{\Sigma_\lambda} w(x) = 0,
\]
then, provided that $w(x) \not\equiv 0$ in $\Sigma_\lambda$, we deduce that 
\[
(-\Delta)^s w(\tilde x) = C_{n,s} \, \text{P.V.} \int_{\Sigma_\lambda} \left( \frac{1}{|\tilde x - y^\lambda|^{\,n+2s}} - \frac{1}{|\tilde x - y|^{\,n+2s}} \right) dy < 0,
\]
which contradicts the fact that
\[
(-\Delta)^s w(\tilde x) = c(\tilde x) w(\tilde x) = 0.
\]
As  a consequence,  $w \equiv 0$ in $\mathbb{R}^n$.
This completes  the proof of Theorem \ref{lem:multi-slab-nrp-simple}.
\end{proof}

\begin{proof}[Proof of Theorem \ref{th1.3}.] 
For any $x^0\in D_R$,
 we  construct a sub-solution $\underline{u}$  in $B_\delta(x^0) \subset \R_{+}^{n}\backslash \bar D$ (see Figure 3),
where $0<\delta <\frac{1}{2}dist(D,\partial\R_{+}^{n})$ is a constant to be determined later.

Let
\[
\psi(x) = a (\delta^2 - |x|^2)_+^s.
\]
It is straightforward to verify that $\psi(x)$ satisfies
\[
(-\Delta)^s \psi(x) = 1, \quad x \in B_\delta(0),
\]
for a suitable choice of the constant $a$.
We then define a translated version of $\psi$ as follows:
\begin{equation*}
\phi(x):=\psi(x-x^0).
\end{equation*}
It is clear that $\phi(x)$ satisfies
 $$
(-\Delta)^s \phi(x)=1,\,\, x\in B_\delta(x^0).
$$

 \begin{center}
\begin{tikzpicture}[scale=1]
            \fill[mysoftblue] (0,3) circle (3);
        \node[blue!60] at (0.5,5) {\Large$D_{R}$};
        \draw[red!70!black,thick,dashed] (1,3)--(2.9,3.7);
        \node[red!70!black] at (1.7,3.6) {$R$};
        \draw[black] (0,3) circle (3);
        \fill[blue!5] (0,3) circle (1);
        \node[blue] at (0.25,3) {$D$};
        \fill[white] (-3,0) rectangle (3,1);
        \draw[black,thin,dashed,-] (-2.236068,1)--(2.236068,1);
        \draw[black] (0,3) circle (1);
        \draw[black,thick,solid,->] (-4.5,0) -- (4.5,0);
    	\node[black] at (4.8,0) {$x'$};
    	\draw[black,thick,->] (0,-1)--(0,6.5);
    	\node[black] at (0.35,6.5) {$x_{n}$};
         \fill[black] (0,0) circle (0.04);
    	\node[black] at (0.25,-0.25) {$\textit{O}$};
        \fill[red!70!black] (-1.5,4.7) circle (0.05);
        \node[red!70!black] at (-1.5,4.5) {$x^{0}$};
        \draw[red!70!black,thick] (-1.5,4.7) circle (0.6);
        \draw[red!70!black,thick,dashed] (-1.5,4.7)--(-1,4.8);
        \node[red!70!black] at (-1.3,4.95) {\scriptsize$\delta$};
        \draw[red!70!black,thin,<-] (-2.8,4.4) arc(270:360:0.9 and 0.3);
       \node[red!70!black] at (-3.2,4.8) {$B_{\delta}(x^{0})$};
        \draw[red!70!black,thick,<->,dashed] (1,0)--(1,1);
        \node[red!70!black] at (2.3,0.5){$\frac{dist(D,\partial\R_{+}^{n})}{2}$};
          \node[below=1cm, align=center] at (0,-1)
            {\fontsize{12}{12}\selectfont Figure~3. The domains $D$ and $D_R$.};
 \end{tikzpicture}
 \end{center}

Define
$$
\chi_D (x):=
\begin{cases}
1, & x\in D,\\
0, & x \in \mathbb{R}^n \backslash D
\end{cases}
$$
be the characteristic function of the set $D$.

Denote
$$
\underline u(x) =\chi_D (x) u(x)+ \varepsilon \phi(x),
$$
where $\varepsilon$ is a positive constant to be determined later.
For $x\in B_\delta (x^0)$,  we deduce from $u\geq C_0>0$ in $D$ that
\begin{eqnarray*}
\begin{aligned}
(-\Delta)^s \underline u(x)=& (-\Delta)^s (\chi_D(x) u(x))+\varepsilon\\
=&C_{n, s} P. V. \int_{\mathbb{R}^n} \frac{0-\chi_D(y) u(y)}{|x-y|^{n+2s}} dy +\varepsilon\\
=& C_{n, s} P. V. \int_D \frac{-u(y)}{|x-y|^{n+2s}} dy + \varepsilon\\
\leq & -C_1 +\varepsilon.
\end{aligned}
\end{eqnarray*}
Let $\varepsilon=\frac{C_1}{2}$. Then 
\begin{equation}\label{uderlineu}
(-\Delta)^s \underline u(x) \leq -\frac{C_1}{2},\quad x\in B_\delta(x^0).
\end{equation}
On the other hand, since $f(0)=0$,  $f$ is continuous, and $u$ is uniformly continuous, then there exist  constants $\delta_1, \delta_2>0$, independent of $x^0$, such that 
$$
|f(u(x^0))-f(0)|\leq  \frac{C_1}{4}, \,\,\, \mbox{if}\quad u(x^0)<\delta_2.
$$
and 
$$
|f(u(x))-f(u(x^0))|\leq \frac{C_1}{4}, \,\,\,  \mbox{if}\quad u(x^0)<\delta_2\,\,\mbox{and}\,\, x\in B_{\delta_1}(x^0)\subset D_R.
$$
As a consequence, if $u(x^0)<\delta_2$ and $x\in B_{\delta_1}(x^0)\subset  D_R$, we have 
\begin{equation}\label{estimateu}
(-\Delta)^s  u(x)=f(u(x))\geq-|f(u(x))-f(u(x^0))| -|f(u(x^0))-f(0)|\geq  -\frac{C_1}{2}.
\end{equation}
Taking $\delta:=\delta_1$, and combining \eqref{estimateu} with \eqref{uderlineu} gives rise to
\[
(-\Delta)^s u(x) \geq \; (-\Delta)^s \underline{u}(x),
\quad x \in B_{\delta_1}(x^0),
\]
which shows that $\underline{u}(x)$ is a sub-solution  in $B_{\delta_1}(x^0)\subset  D_R$.

To verify the exterior condition, we note that $u(x)$ is nonnegative in $\mathbb{R}^n$. Hence,
\[
\underline{u}(x) = \chi_D(x) u(x) \leq u(x), \qquad x \in \mathbb{R}^n \setminus B_{\delta_1}(x^0).
\]
Therefore, the sub-solution $\underline{u}(x)$ satisfies
\[
\begin{cases}
(-\Delta)^s \underline{u}(x) - (-\Delta)^s u(x) \leq 0, & x \in B_{\delta_1}(x^0), \\[4pt]
\underline{u}(x) - u(x) \leq 0, & x \in \mathbb{R}^n \setminus B_{\delta_1}(x^0).
\end{cases}
\]
By the maximum principle for the fractional Laplacian, we deduce that
\[
\underline{u}(x) \leq u(x), \qquad x \in B_{\delta_1}(x^0),
\]
and consequently,
\[
u(x) \geq \frac{C_1}{2} \phi(x), \,\,\,  \mbox{if}\quad u(x^0)<\delta_2\,\,\mbox{and}\,\, x\in B_{\delta_1}(x^0).
\]
In particular,  
\begin{equation}\label{ux0}
u(x^0) \geq \frac{C_1}{2} \phi(x^0)= \frac{C_1}{2} a \delta_1^{2s},\,\,\,\mbox{if} \,\, \,u(x^0)\leq \delta_2.
\end{equation}

Define
\[
\delta_0=\min\left\{\frac{C_1}{2}a\delta_1^{2s},\,\delta_2\right\}.
\]
We now deduce that for any $x^0\in D_R$, one has
\[
u(x^0)\ge \delta_0.
\]

Indeed, suppose by contradiction that $u(x^0)<\delta_0$.

\noindent\textbf{Case 1.} If $\frac{C_1}{2}a\delta_1^{2s}<\delta_2$, then
\[
u(x^0)<\delta_0=\frac{C_1}{2}a\delta_1^{2s}<\delta_2,
\]
which contradicts \eqref{ux0}.

\noindent\textbf{Case 2.} If $\frac{C_1}{2}a\delta_1^{2s}\ge \delta_2$, then
\[
u(x^0)<\delta_0=\delta_2\le \frac{C_1}{2}a\delta_1^{2s},
\]
which again contradicts \eqref{ux0}.

Therefore, $u(x^0)\ge \delta_0$ for all $x^0\in D_R$.
Consequently, \eqref{eq1.7} holds, and the proof of Theorem~\ref{th1.3} is completed.
\end{proof}

\begin{proof}[Proof of Theorem \ref{AE-anti}.]

For any $x^0\in \tilde D_R$,
 we construct an antisymmetric sub-solution $\underline{w}$ of $w$ in $B_\delta(x^0) \subset \tilde D_R$,
where $\delta > 0$ is a constant (to be specified later) satisfying
$\delta \leq \tfrac{1}{2}\,\mathrm{dist}(x^0, T_\lambda)$, where $0<\delta <\frac{1}{2}dist(\tilde D,\partial\R_{+}^{n})$.

   \begin{center}
\begin{tikzpicture}[scale=1]
      \fill[mysoftblue]  (0,-3) circle (3);
        \node[blue!60] at (0.7,-5) {\Large$\tilde{D}_{R}$};
        \draw[red!70!black,thick,dashed] (1,-3)--(3,-3);
        \node[red!70!black] at (2,-2.8) {$R$};
        \draw[black] (0,-3) circle (3);
         \fill[blue!4] (0,-3) circle (1);
        \node[blue] at (0.3,-3) {$\tilde{D}$};
        \draw[black] (0,3) circle (1);
        \fill[blue!4] (0,3) circle (1);
        \node[blue] at (0.35,3) {$\tilde{D}^{\lambda}$};
        \fill[white] (-3,-1) rectangle (3,0);
        \draw[black,thin,dashed,-] (-2.236068,-1)--(2.236068,-1);
        \draw[black] (0,-3) circle (1);
        \draw[black,thick,solid,-] (-4.5,0) -- (4.5,0);
    	\node[black] at (4.75,0) {$T_{\lambda}$};
    	\draw[black,thick,->] (0,-6.5)--(0,4.5);
    	\node[black] at (0.35,4.5) {$x_{n}$};
        \fill[red!70!black] (-1.5,-1.8) circle (0.05);
        \node[red!70!black] at (-1.55,-1.6) {$x^{k}$};
        \draw[red!70!black,thick] (-1.5,-1.8) circle (0.6);
        \draw[red!70!black,thick,dashed] (-1.5,-1.8)--(-0.9,-1.8);
        \node[red!70!black] at (-1.2,-2) {\scriptsize$\delta$};
        \draw[red!70!black,thin,->] (-1.95,-2.05) arc(0:90:0.9 and 0.3);
        \node[red!70!black] at (-3.5,-1.9) {$B_{\delta}(x^{k})$};
        \draw[red!70!black,thick,<->,dashed] (1,-1)--(1,0);
        \node[red!70!black] at (2.3,-0.5){$\frac{dist(\tilde{D},T_{\lambda})}{2}$};
        \fill[red!80!black] (-1.5,1.8) circle (0.05);
        \node[red!80!black] at (-1.65,2) {\scriptsize$(x^{k})^{\lambda}$};
        \draw[red!70!black,thick] (-1.5,1.8) circle (0.6);
        \draw[red!70!black,thick,dashed] (-1.5,1.8)--(-0.9,1.8);
        \node[red!70!black] at (-1.2,1.6) {\scriptsize$\delta$};
        \draw[red!70!black,thin,<-] (-2.5,1.25) arc(270:360:0.9 and 0.3);
        \node[red!70!black] at (-3.2,1.6) {$B_{\delta}((x^{k})^{\lambda})$};
        \node[black] at (4,-4) {\Large$\Sigma_{\lambda}$};
          \node[below=1cm, align=center] at (0,-6)
            {\fontsize{12}{12}\selectfont Figure~4. The domains $\tilde D, \tilde D^\lambda, \tilde D_R$ and $B_\delta(x^k)$.};
 \end{tikzpicture}
  \end{center}

Let
\[
\phi(x) = a \bigl(\delta^2 - |x - x^0|^2\bigr)_+^s, \quad 
\phi_\lambda(x) = a \bigl(\delta^2 - |x^\lambda - x^0|^2\bigr)_+^s.
\]
Denote
$$
\varphi (x)=\phi(x)-\phi_\lambda (x).
$$
where   $a$  is a positive constant satisfying
\[
(-\Delta)^s \phi(x) = 1, \quad x \in B_\delta(x^0).
\]

It is then clear that $\varphi(x)$ is antisymmetric with respect to the plane $T_\lambda$.

We now define $\tilde D_\lambda$ as the reflection of $\tilde D$ with respect to the plane $T_\lambda$, and
$$
\underline w(x)= w(x) \chi_{\tilde D\cup \tilde D_\lambda} +\varepsilon \varphi (x),
$$
where $\varepsilon$ is a positive constant to be determined later.

Since $w(y) \geq \tilde C>0$ in $\tilde C$, for $x\in B_\delta (x^0)$, a direct calculation yields
\begin{eqnarray*}
\begin{aligned}
(-\Delta)^s \underline w(x)=& (-\Delta)^s (\chi_{\tilde D\cup \tilde D_\lambda} w(x))+\varepsilon  (-\Delta)^s \varphi(x)\\
=& C_{n, s}  \int_{\tilde D \cup {\tilde D_\lambda}}  \frac{-w(y)}{|x-y|^{n+2s}} dy + C_1 \varepsilon-\varepsilon (-\Delta)^s \phi(x^\lambda)\\
\leq & -\tilde C C_{n, s}  \int_{\tilde D}   \left( \frac{1}{|x-y|^{n+2s}} -\frac{1}{|x-y^\lambda|^{n+2s}} \right)dy + C_1 \varepsilon\\
&+\varepsilon C_{n, s} \int_{B_\delta((x^k)^\lambda)} \frac{1}{|x-y|^{n+2s}}dy \\
\leq & - C_2 +C_3\varepsilon.
\end{aligned}
\end{eqnarray*}
Let $\varepsilon=\frac{C_2}{2C_3}$. Then 
\begin{equation}\label{2uderlineu}
(-\Delta)^s \underline w(x) \leq -\frac{C_2}{2},\quad x\in B_\delta(x^0).
\end{equation}

Noting that $c(x)$ is bounded from below, since $w$ is uniformly continuous,
there exist constants $\tilde \delta_1, \tilde \delta_2>0$ independent of $x^0$
such that 
\begin{equation}\label{2estimateu}
(-\Delta)^s w(x) = c(x) w(x) \geq -\frac{C_2}{2}, \quad\mbox{if}\,\,w(x^0)<\tilde \delta_2\,\,\mbox{and}\,\,  x\in B_{\tilde\delta_1}(x^0)\subset \tilde D_R.
\end{equation}

Taking $\delta=\tilde\delta_1$, and combining  \eqref{2uderlineu} with \eqref{2estimateu}, we deduce 
\begin{equation}\label{2comu}
(-\Delta)^s w(x) \geq\; (-\Delta)^s \underline w(x), \quad\mbox{if}\,\,w(x^0)<\tilde \delta_2\,\,\mbox{and}\,\,  x\in B_{\tilde\delta_1}(x^0)\subset \tilde D_R.
\end{equation}
which implies that $\underline w(x)$ is a sub-solution of $w(x)$ in $B_{\tilde\delta_1}(x^0)\subset \tilde D_R$.

Since $ w(x)$ is nonnegative in $\Sigma_\lambda$, it follows that
$$
\underline w(x)
= \chi_{\tilde D \cup \tilde D_\lambda} w(x) + \frac{C_2}{2C_3} \varphi(x)
\;\leq\; w(x),
\quad x \in \Sigma_\lambda \setminus B_{\tilde\delta_1}(x^0).
$$
Therefore, the sub-solution $\underline w(x)$ satisfies
\begin{eqnarray*}
\begin{cases}
(-\Delta)^s \underline w(x) - (-\Delta)^s w(x) \leq 0, & x \in B_{\tilde\delta_1}(x^0), \\[4pt]
\underline w(x) - w(x) \leq 0, & x \in \Sigma_\lambda \setminus B_{\tilde\delta_1}(x^0).
\end{cases}
\end{eqnarray*}
By the maximum principle  for antisymmetric functions associated with the fractional Laplacian, we deduce
$$
\underline w(x) \leq w(x), \quad x \in B_{\tilde\delta_1}(x^0),
$$
and consequently,
$$
w(x) \geq \frac{C_2}{2C_3} \varphi(x),  \quad\mbox{if}\quad u(x^0)<\tilde \delta_2\,\,\mbox{and}\,\, x \in B_{\tilde\delta_1}(x^0).
$$
In particular,  
\begin{equation*}
w(x^0) \geq \frac{C_2}{2C_3} \varphi(x^0)= \frac{C_2}{2C_3} a \tilde \delta_1^{2s},\,\,\,\mbox{if} \,\, \,u(x^0)\leq \tilde\delta_2.
\end{equation*}

Define
\[
\tilde\delta_0=\min\left\{\frac{C_2}{2C_3} a \tilde \delta_1^{2s},\,\tilde \delta_2\right\}.
\]
We can deduce that for any $x^0\in \tilde D_R$, one has
\[
u(x^0)\ge \tilde \delta_0.
\]
It follows that \eqref{eq1.9} holds.
This completes the proof of Theorem~\ref{AE-anti}.
\end{proof}

\section{Monotonicity}

This section is devoted to the proof of monotonicity (Theorem~\ref{th1.1}), based on the boundary regularity (Theorem~\ref{th1.4}), the narrow region principle in unbounded domains (Theorem~\ref{lem:multi-slab-nrp-simple}), and the averaging effects (Theorems~\ref{th1.3} and \ref{AE-anti}).
In addition, we also require the following results on interior regularity and the Harnack inequality for fractional equations.

\begin{lemma}(\cite[Theorem 1.1]{CLWX})\label{le3.1}
Assume that $u\in C_{loc}^{1, 1}(\mathbb{R}^n)\cap \mathcal{L}_{2s}$ is a nonnegative solution of
\begin{equation*}
(-\Delta)^s u(x)=f(x),\,\, x \in B_1(0),
\end{equation*}
where  $f(x)$ is   bounded 
  in $B_1(0)$.
 Then there exist some positive constants $C_1$ and $C_2$ such that
\begin{equation*}
\|u\|_{C^{[2s], \{2s\}}(B_{1/2}(0))}\leq C_1\left(\|f\|_{L^\infty(B_1(0))}+\|u\|_{L^\infty(B_1(0))}\right), \,\, \mbox{if}\,\, s\neq \frac{1}{2},
\end{equation*}
and
\begin{equation*}
\|u\|_{C^{0, \ln L}(B_{1/2}(0))}\leq C_2\left(\|f\|_{L^\infty(B_1(0))}+\|u\|_{L^\infty(B_1(0))}\right), \,\, \mbox{if}\,\, s= \frac{1}{2},
\end{equation*}
where $[2s]$ and $\{2s\}$ denote the integer part and the fractional part of $2s$ respectively.
\end{lemma}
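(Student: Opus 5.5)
The plan is to split $u$ into a Green potential and an $s$-harmonic part, as in the proof of Theorem~\ref{th1.4}, but now in the interior ball $B_{3/4}(0)$. The one genuinely new ingredient is a tail bound. The right-hand side of the estimate contains only $\|u\|_{L^\infty(B_1)}$, not $\|u\|_{\mathcal{L}_{2s}}$, so nonnegativity of $u$ has to be used to control the far-field contribution by local quantities.

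\emph{Step 1 (tail bound).} Fix a smooth cutoff $\eta\in C_c^\infty(B_{1/2})$, $0\le\eta\le1$, $\eta\not\equiv0$. For $|y|\ge1$ we have $(-\Delta)^s\eta(y)=-C_{n,s}\int\eta(z)|y-z|^{-n-2s}dz\le -c\,(1+|y|)^{-n-2s}$, and $|(-\Delta)^s\eta|\le C$ everywhere. Pairing the equation with $\eta$, which is legitimate since $u\in C^{1,1}_{loc}\cap\mathcal{L}_{2s}$, gives
\[
\int_{\mathbb{R}^n}u\,(-\Delta)^s\eta\,dy=\int_{B_1} f\eta\,dy .
\]
Moving the part over $\{|y|\ge1\}$ to one side and using $u\ge0$ then yields
\[
\int_{|y|\ge1}\frac{u(y)}{1+|y|^{n+2s}}dy\le C\big(\|f\|_{L^\infty(B_1)}+\|u\|_{L^\infty(B_1)}\big).
\]
This is exactly where nonnegativity enters.

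\emph{Step 2 (decomposition).} Write $u=v+h$, where $v(x)=\int_{B_{3/4}}G_{3/4}(x,y)f(y)\,dy$ is the Green potential of the ball $B_{3/4}$ (as in Section 2). The remainder $h$ then satisfies $(-\Delta)^s h=0$ in $B_{3/4}$ and $h=u$ outside $B_{3/4}$, so it is given by the Poisson kernel
\[
h(x)=c\int_{|y|>3/4}\Big(\frac{9/16-|x|^2}{|y|^2-9/16}\Big)^s\frac{u(y)}{|x-y|^n}\,dy .
\]
For $x\in B_{1/2}$ this kernel is smooth in $x$, with all $x$-derivatives bounded by $C(|y|^2-9/16)^{-s}(1+|y|)^{-n}$. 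The singularity $(|y|-3/4)^{-s}$ is integrable, so $\|h\|_{C^2(B_{1/2})}$ is bounded by $C\|u\|_{L^\infty(B_1)}$ plus $C$ times the tail integral from Step 1.

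\emph{Step 3 (the potential).} For $v$ one has $\|v\|_{L^\infty}\le C\|f\|_\infty$ by comparison with $\|f\|_\infty\phi$, exactly as in Step 1 of the proof of Theorem~\ref{th1.4}. For the interior regularity, compare $G_{3/4}$ with the Riesz kernel $c|x-y|^{2s-n}$; the difference is smooth for $x$ in $B_{1/2}$. It therefore suffices to estimate the Riesz potential $I_{2s}(f\chi_{B_{3/4}})$ on $B_{1/2}$, using the standard splitting of $|x-y|^{2s-n}-|\bar x-y|^{2s-n}$ into the region $|y-x|<2|x-\bar x|$ and its complement. This gives:
\begin{itemize}
\item for $2s<1$, a $C^{0,2s}$ bound;
\item for $2s>1$, differentiating once, a $C^{1,2s-1}$ bound;
\item for $2s=1$, a log-Lipschitz modulus $|x-\bar x|\,|\ln|x-\bar x||$, because the near and far pieces each contribute a logarithm.
\end{itemize}
The case $n=1$ with the logarithmic kernel is handled the same way.

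I expect the main obstacle to be Step 3 at the borderline exponent $2s=1$, and more generally keeping the estimates uniform near $2s\in\mathbb{N}$, where the Hölder–Zygmund scale degenerates. I would first do the splitting computation carefully for $2s=1$ to extract the $\ln L$ modulus, then treat $2s\ne1$ by the same computation with the cancellation of the first-order Taylor term when $2s>1$. Combining Steps 1–3 gives both estimates with constants $C_1,C_2$ depending only on $n,s$.
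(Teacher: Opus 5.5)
The paper gives no proof of this lemma. It is quoted from \cite[Theorem 1.1]{CLWX} and used as a black box, so your argument is an independent, self-contained route, and it is correct.

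Its essential step is Step 1. You test the equation against a nonnegative bump $\eta$ supported in $B_{1/2}$. You use $-(-\Delta)^s\eta(y)=C_{n,s}\int\eta(z)|y-z|^{-n-2s}dz\ge c(1+|y|)^{-n-2s}$ for $|y|\ge 1$, together with $u\ge 0$. This bounds the weighted tail $\int_{|y|\ge 1}u(y)(1+|y|)^{-n-2s}dy$ by $C(\|f\|_{L^\infty(B_1)}+\|u\|_{L^\infty(B_1)})$. It makes explicit why nonnegativity is exactly what permits an estimate with no $\mathcal{L}_{2s}$-term on the right, which the citation leaves hidden.

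The rest mirrors the Green/Poisson decomposition the paper itself uses for Theorem~\ref{th1.4}:
\begin{itemize}
\item The Poisson part is smooth on $B_{1/2}$, since $|x-y|\ge |y|/3$ there and $(|y|-3/4)^{-s}$ is integrable.
\item The regular part of $G_{3/4}$ is the Poisson integral of $|\cdot-y|^{2s-n}$. It is smooth in $x\in B_{1/2}$ uniformly in $y\in B_{3/4}$, because $(|z|-3/4)^{-s}|z-y|^{2s-n}$ is integrable uniformly in $y$, even for $y$ near $\partial B_{3/4}$.
\item The classical Riesz-potential difference estimates then give the $C^{0,2s}$, $C^{1,2s-1}$ and log-Lipschitz bounds.
\end{itemize}
Citing buys brevity; your route buys a proof built only from tools already present in the paper.

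A few minor points, none affecting validity:
\begin{itemize}
\item For $2s=1$, the near region $|y-x|<2|x-\bar x|$ contributes only $O(|x-\bar x|)$. The logarithm comes solely from the far region, and it is finite only because $f\chi_{B_{3/4}}$ has compact support.
\item No uniformity near $2s\in\mathbb{N}$ is needed. The constants $C_1,C_2$ may depend on $s$, and the only integer value in range, $2s=1$, is treated separately.
\item Identifying $u-v$ with the Poisson integral of $u$ needs the standard uniqueness argument: continuity of $u$ near $\partial B_{3/4}$, boundary continuity of $v$ and of the Poisson integral, and the maximum principle. The paper uses this tacitly in Section~2 as well.
\item For $n=1<2s$ the fundamental solution is a multiple of $|x|^{2s-1}$, not logarithmic. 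The same computation applies, and the paper only needs $n\ge 2s$.
\end{itemize}
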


To begin with, we need the following Harnack inequality for  fractional equations.
\begin{lemma} \label{le3.2}(\cite[Lemma 3]{BGQ2019PAMS} or \cite[Theorem 6.1]{BB-SM-1999})
Let $s \in (0, 1),\, x_0\in \mathbb{R}^n,$ and $c(x) \in L^\infty(B_{2R}(x_0)).$ Then there exists a constant $C_0$ depending only on $n, s$ and $R^{2s}\|c\|_{L^\infty(B_{2R}(x_0))}$	 such that for every nonnegative solution of
$$
(-\Delta)^s u(x)=c(x)u(x),\,\, x\in B_{2R}(x_0),
$$
we have
\begin{equation*}
\sup_{B_R(x_0)} u \leq  C_0 \inf_{B_R(x_0)} u.
\end{equation*}
\end{lemma}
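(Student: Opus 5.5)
The plan is to reduce to a small-scale comparison with the Poisson kernel, where the potential $cu$ is a perturbation, and then cover $B_R(x_0)$ by finitely many small balls. The argument uses that $u\ge 0$ in all of $\mathbb{R}^n$ (as in our setting, where solutions are nonnegative everywhere), so the Poisson integral has a sign. By scaling $x\mapsto x_0+Rx$ we may assume $R=1$; the coefficient then becomes $R^{2s}c$, which is why the constant depends on $R^{2s}\|c\|_{L^\infty}$. Write $\Lambda:=\|c\|_{L^\infty(B_2(x_0))}$.

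\emph{Step 1 (representation on a small ball).} Fix $z\in B_1(x_0)$ and $r\le 1/2$, to be chosen depending only on $n,s,\Lambda$. On $B_r(z)$ decompose, exactly as in the proof of Theorem \ref{th1.4},
\[
u=P+Q,\qquad P(x)=\int_{|y-z|>r}\frac{c_{n,s}(r^2-|x-z|^2)^s}{(|y-z|^2-r^2)^s|x-y|^n}u(y)\,dy\ \ge 0,\qquad Q(x)=\int_{B_r(z)}G_r(x,y)c(y)u(y)\,dy ,
\]
with $G_r$ the Green function of $B_r(z)$. Set $M_\rho:=\sup_{B_\rho(z)}u$, which is finite by continuity. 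Since $\int_{B_r}G_r(x,y)\,dy\le Cr^{2s}$, we have $|Q|\le C\Lambda r^{2s}M_r=:\varepsilon M_r$ on $B_r(z)$.

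\emph{Step 2 (upper bound by iteration).} For $|x-z|<\rho<r$ and $|y-z|>r$ we have $|x-y|\ge \frac{\rho'}{r}\,|y-z|$ with $\rho'=r-\rho$. Comparing kernels therefore gives $P(x)\le C\big(\tfrac{r}{r-\rho}\big)^{n}P(z)$. Also $P(z)=u(z)-Q(z)\le u(z)+\varepsilon M_r$. Hence for every $\rho<\rho_1\le r$, applying this on the ball $B_{\rho_1}(z)$,
\[
M_\rho\le C\Big(\frac{\rho_1}{\rho_1-\rho}\Big)^{n}\big(u(z)+\varepsilon M_{\rho_1}\big)+\varepsilon M_{\rho_1}.
\]
Choosing $r$ so small that $\varepsilon$ is small, the standard absorption (Giaquinta-type iteration) lemma for a bounded nondecreasing function $\rho\mapsto M_\rho$ on $[r/2,r]$ gives $M_{r/2}\le C u(z)$. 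Keeping the polynomial blow-up $(\rho_1-\rho)^{-n}$ under control while absorbing $\varepsilon M_{\rho_1}$ is the main technical point.

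\emph{Step 3 (lower bound).} For $x\in B_{r/4}(z)$ the same kernel comparison in the reverse direction gives $P(x)\ge c\,P(z)$. Applying Step 2 with $r/2$ in place of $r$ controls $M_{r/2}$, so $Q$ on $B_{r/2}(z)$ is bounded by $C\Lambda r^{2s}u(z)$. Therefore
\[
u(x)\ge cP(z)-\varepsilon' u(z)\ge c\big(u(z)-\varepsilon' u(z)\big)-\varepsilon' u(z)\ge \tfrac c2\, u(z)
\]
for $r$ small. Thus $u$ is comparable to $u(z)$ on $B_{r/4}(z)$, with constants depending only on $n,s,\Lambda$.

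\emph{Step 4 (covering).} Any two points of $B_1(x_0)$ can be joined by a chain of $N\le C/r$ overlapping balls $B_{r/4}(z_i)$ with $z_i\in B_1(x_0)$. Iterating Step 3 along the chain yields $\sup_{B_1}u\le C_0^{N}\inf_{B_1}u$, which is the claimed Harnack inequality with $C_0$ depending only on $n$, $s$ and $R^{2s}\|c\|_{L^\infty(B_{2R}(x_0))}$. The degenerate case $u(z)=0$ is covered as well: Steps 2--3 then force $u\equiv0$ on the ball, so the inequality holds trivially.
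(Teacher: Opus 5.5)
\textbf{There is a genuine gap in Step 2.} The paper gives no proof of this lemma; it quotes it from \cite{BGQ2019PAMS} and \cite{BB-SM-1999}, so your argument has to stand on its own. Reading ``nonnegative'' as $u\ge 0$ on all of $\mathbb{R}^n$ is correct, and Steps 1, 3 and 4 would be fine if $M_{r/2}\le C\,u(z)$ were available. The problem is that this bound does not follow from the inequality you derive. There the blow-up factor multiplies $M_{\rho_1}$ itself, so the coefficient of $M_{\rho_1}$ is $\varepsilon\bigl(1+C(\rho_1/(\rho_1-\rho))^n\bigr)$, not a fixed $\theta<1$. A Giaquinta-type lemma only tolerates blow-up in the inhomogeneous term.

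No other absorption scheme can rescue this inequality either. Take $0<\eta\le r/2$, $K>0$, and an integer $m\ge n$ with $\binom{m}{n}\ge 2^n/(C\varepsilon)$. The nondecreasing bounded function $\phi(\rho)=K(r+\eta-\rho)^{-m}$ satisfies, with $t=(\rho_1-\rho)/r\le 1/2$,
\[
\frac{\phi(\rho_1)}{\phi(\rho)}\ge (1+t)^m\ge \binom{m}{n}t^n\ge \frac{2^n t^n}{C\varepsilon}\ge \Bigl[C\varepsilon\Bigl(\frac{\rho_1}{\rho_1-\rho}\Bigr)^n\Bigr]^{-1}.
\]
So $\phi$ satisfies your recursive inequality for all $r/2\le\rho<\rho_1\le r$, even with $u(z)=0$, while $\phi(r/2)=K(r/2+\eta)^{-m}$ is arbitrary. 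Equivalently, any chain $\rho_0<\rho_1<\cdots$ in $[r/2,r]$ has steps tending to $0$, so the product of the coefficients diverges.

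The underlying issue is that you measure the error $Q$ against $\sup u$. Meanwhile the Poisson comparison is lossy near $\partial B_{\rho_1}(z)$, where exterior mass of $u$ can concentrate without affecting $P(z)$. You need to measure $Q$ against $P$ instead. The key input is the conditional-lifetime estimate
\[
\int_{B_r(z)}G_r(x,w)\,P_r(w,y)\,dw\le C\,r^{2s}P_r(x,y),\qquad x\in B_r(z),\ |y-z|>r .
\]
It follows from the Ikeda--Watanabe formula $P_r(w,y)=C_{n,s}\int_{B_r(z)}G_r(w,v)|v-y|^{-n-2s}\,dv$ together with the 3G inequality for $G_r$; this is the gauge machinery behind \cite{BB-SM-1999}. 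Integrating against $u(y)$ gives $G_r[P]\le Cr^{2s}P$.

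Now set $\varepsilon=C\Lambda r^{2s}<1/2$ and $\theta=\sup_{B_r(z)}u/P$. This $\theta$ is finite, because $P(x)\ge c\,(d(x)/r)^sP(z)$ while $|Q(x)|\le C\Lambda M_r\, r^s d(x)^s$, where $d(x)=r-|x-z|$. If instead $P(z)=0$, then $u\equiv 0$ off $B_r(z)$, and $u=G_r[cu]$ forces $u\equiv 0$. From $u=P+G_r[cu]$ you get $u\le(1+\varepsilon\theta)P$, hence $\theta\le(1-\varepsilon)^{-1}$. You also get $u\ge(1-\varepsilon\theta)P\ge\frac{1-2\varepsilon}{1-\varepsilon}P$.

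So $u$ is comparable to $P$ on all of $B_r(z)$. The two-sided kernel comparison for $P$ on $B_{r/2}(z)$ then makes $u$ comparable to $u(z)$ there, replacing your Steps 2--3, and your covering Step 4 finishes the proof. Alternatively, you can take the De Giorgi--Nash--Moser route: local boundedness with a nonlocal tail term, plus a weak Harnack inequality for supersolutions.
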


\begin{proof}[Proof of Theorem \ref{th1.1}.] First,  according to   equation \eqref{main} with  $f(0)\geq 0$,  and applying  the strong maximum principle for the fractional Laplacian, we obtain $$u(x)>0\ \mb{in}\ \R^n_+.$$ From the definition of $w_\lambda(x)$ and \eqref{main}, it can be readily verified that $w_\lambda(x)$ satisfies
\begin{equation}\label{eq3.1}
\begin{cases}
(-\Delta)^s w_\lambda (x)=C_\lambda(x) w_\lambda(x),& x\in \Omega_\lambda\cap\{w_\lambda<0\},\\
w_\lambda(x) \geq 0, &x\in \Sigma_\lambda \backslash \Omega_\lambda,\\
w_\lambda(x)=-w_\lambda(x^\lambda),& x\in \Sigma_\lambda,
\end{cases}
\end{equation}
Since $u$ is uniformly continuous in  $\mathbb{R}^n_+$ and continuous on $\partial \mathbb{R}^n_+$, we conclude that $u$ is uniformly continuous in  $\overline{\mathbb{R}^n_+}$.

Indeed, if this is not true, then there exist a constant  $\varepsilon_0>0$ and a sequence $x^k=((x^k)', x_n^k)$ and $\bar x^k=((x^k)', 0)$ such that
$$
|x^k-\bar x^k| \to 0\,\, \mbox{as} \,\, k\to \infty,
$$
but
$$
|u(x^k)-u(\bar x^k)| \geq \varepsilon_0.
$$
Then $u(x^k) \geq \varepsilon_0$  due to $u(\bar x^k)=0$. By the continuity of $u,$
there exists a point $y^k$ on the line segment joining the two points  such that
$u(y^k)=\frac{\varepsilon_0}{2}$.

As a consequence, we have
$$
|x^k-y^k| \to 0\,\, \mbox{as} \,\, k\to \infty,
$$
but
$$
|u(x^k)-u(y^k)| \geq \frac{\varepsilon_0}{2}.
$$
This contradicts the fact that $u$ is uniformly continuous in  $\mathbb{R}^n_+$.

Therefore, $u$ is uniformly continuous in $\overline{\mathbb{R}^n_+}$, and for any $\varepsilon>0$ there exists $\delta>0$ such that
\[
 |u(x)-u(y)|<\varepsilon,\quad \mbox{if} \quad |x-y|<\delta. 
\]
Fix $\lambda>0$. For any point $x=(x',x_n)\in\Omega_\lambda$, one can connect $x$ to the boundary point $(x',0)$ by a finite chain of points whose successive distances are less than $\delta$. Consequently,
\[
|u(x)| \le |u(x',0)| + C_\lambda=C_\lambda,
\]
where $C_\lambda>0$ depends only on $\lambda$ and on the modulus of continuity of $u$.
It follows that $u$ is bounded in any slab, that is, $u$ is bounded in $\Omega_\lambda$ for any fixed $\lambda>0$. Therefore, the coefficient function
$$
C_\lambda(x)=\frac{f(u_\lambda(x))-f(u(x))}{u_\lambda(x)-u(x)}
$$
is bounded in $\Omega_\lambda\cap\{w_\lambda<0\}$ for any fixed $\lambda > 0$ since  $f \in C_{\mathrm{loc}}^{0,1}([0, \infty))$.

To establish the strict monotonicity of $u(x)$ with respect to $x_n$, it suffices to show that the antisymmetric function satisfies $w_\lambda(x) > 0$ in $\Omega_\lambda$ for any $\lambda > 0$.

We now divide the proof into two steps.

\vspace{2mm}

{\it{Step 1.}} We start moving the plane $T_\lambda$ upward from $x_n = 0$  along the $x_n$-direction.
First, by the non-negativity and the boundedness of $u$ in any slab $\Omega_{\lambda}$, for any sufficiently small $\lambda > 0$, we have
\[
w_\lambda(x) = u_\lambda(x) - u(x) \geq -C(\lambda), \quad x \in \Omega_\lambda.
\]
Then, applying Theorem \ref{lem:multi-slab-nrp-simple} to the problem \eqref{eq3.1}, we obtain
\begin{equation}\label{eq3.2}
w_\lambda(x) \geq 0, \quad x \in \Omega_\lambda,
\end{equation}
for sufficiently small $\lambda > 0$, where $\Omega_\lambda$ denotes a narrow region.
Inequality \eqref{eq3.2} provides the starting point for continuing to move the plane $T_\lambda$.

\vspace{2mm}

{\it{Step 2.}} In this step, we continue to move the plane $T_\lambda$ upward along the $x_n$-direction as long as \eqref{eq3.2} holds,  until it reaches its limiting position.
Define
\begin{equation}\label{def-l0}
\lambda_0:=\sup \{\lambda \mid w_\mu (x) \geq 0, \,\, x\in \Sigma_\mu\,\, \mbox{for any}\,\, \mu\leq \lambda\}.
\end{equation}
Our goal is to show that
$$
\lambda_0=+\infty.
$$
Assume, for contradiction, that $0 < \lambda_0 < +\infty$. By the definition of $\lambda_0$, there exists a monotone decreasing sequence $\{\lambda_k\}$ such that $\lambda_k \to \lambda_0$ as $k \to \infty$, along which
\begin{equation}\label{eq3.3-1}
\inf_{\Omega_{\lambda_k}} w_{\lambda_k}=\inf_{\Sigma_{\lambda_k}} w_{\lambda_k}:=-m_k<0.
\end{equation}
We begin by proving that
\begin{equation}\label{eq3.4-1}
m_k \to 0\,\, \mbox{as}\,\,  k \to \infty.
\end{equation}

In fact, if \eqref{eq3.4-1} is not true, there exists a positive constant $M>0$ (independent of $k$) such that
$$
\inf_{\Sigma_{\lambda_k}} w_{\lambda_k}<-M<0.
$$
It follows that for any fixed $k$, there exists $z^k \in \Omega_{\lambda_k}$ such that
\begin{equation}\label{eq3.5-1}
 w_{\lambda_k}(z^k)\leq -M<0,
\end{equation}

If $z^k \in \Sigma_{\lambda_k}\backslash \Sigma_{\lambda_0}$, then $|z^k-(z^k)^{\lambda_k}| \to 0$ as $k\to \infty$, and hence, by the uniform continuity of $u$, we obtain
$$
w_{\lambda_k}(z^k)={u_{\lambda_k}(z^k) -u(z^k) \to 0},
\,\, \mbox{as}\,\, k\to \infty,
$$
which contradicts \eqref{eq3.5-1}.

If $z^k\in \Omega_{\lambda_0}$,  since $u$ is uniformly continuous, then
$$
w_{\lambda_k}(z^k)-w_{\lambda_0}(z^k)={u_{\lambda_k}(z^k)-u_{\lambda_0}(z^k) \to 0,}
\,\, \mbox{as}\,\, k\to \infty.
$$
Moreover, noting that $w_{\lambda_0}(z^k) \geq 0$ and using \eqref{eq3.5-1}, we  have
$$w_{\lambda_k}(z^k)-w_{\lambda_0}(z^k)\leq -M,$$
which leads to a contradiction. Hence, \eqref{eq3.4-1} holds.

Combining \eqref{eq3.3-1} and \eqref{eq3.4-1}, we conclude that there exists a point $x^k \in \Omega_{\lambda_k}$ such that
\begin{equation}\label{ctdct-1}
w_{\lambda_k}(x^k) \leq -m_k + m_k^2 < 0
\end{equation}
for all sufficiently large $k$.

(i)
We first claim that
\begin{equation}\label{eq3.6}
\delta_k:=dist (x^k, T_{\lambda_k})=\lambda_k-x_n^k \,\, \mbox{is bounded from zero for sufficiently large }\,\, k.
\end{equation}

If \eqref{eq3.6} is not true, then $\delta_k \to 0$ as $k \to \infty$.
Let $\eta(x) \in C_0^\infty(B_1(0))$ be a sequence of smooth cut-off functions
 satisfying
\begin{equation}\label{eqeta}
0\leq \eta(x) \leq 1 \,\, \mbox{in}\,\, B_{1}(0) \,\,\mbox{and}\,\, \eta (x) \equiv 1 \,\,\mbox{in}\,\, B_{{1}/{2}}(0).
\end{equation}
Define
\begin{equation*}
\eta_k(x):=\eta\left(\frac{x-x^k}{\delta_k} \right)\in C_0^\infty \left(B_{\delta_k} (x^k) \right)
\end{equation*}
and
$$
h_k(x)=w_{\lambda_k}(x)-m_k^2 \eta_k(x).
$$
Then $h_k(x)$ satisfies
$$
h_k(x^k)=w_{\lambda_k}(x^k)-m_k^2 \eta_k(x^k)\leq -m_k+m_k^2-m_k^2=-m_k,
$$
and
$$
h_k(x)=w_{\lambda_k}(x)\geq -m_k,\,\, x\in \Sigma_{\lambda_k}\backslash B_{\delta_k} (x^k)
$$
due to \eqref{eq3.3-1}.
Therefore, there exists a point $\bar x^k \in B_{\delta_k} (x^k)$ such that
$$
-m_k-m_k^2\leq h_k(\bar x^k )=\inf_{\Sigma_{\lambda_k}}h_k(x) \leq  -m_k.
$$
Then, it follows from the definition of $h_k$ that
$$
-m_k\leq w_{\lambda_k}(\bar x^k)  \leq  -m_k+m_k^2<0
$$
for sufficiently large $k$.
Then, at this minimum point $\bar x^k$ of $h_k(x)$, it follows from the definition of the fractional Laplacian that

\begin{equation}\label{eq3.7}
\begin{aligned}
&(-\Delta)^s h_k(\bar x^k)\\
=&C_{n, s} P.V. \int_{\Sigma_{\lambda_k}} \frac{h_k(\bar x^k)-h_k(y)}{|\bar x^k -y|^{n+2s}} dy +C_{n, s} \int_{\Sigma_{\lambda_k}} \frac{h_k(\bar x^k)-h_k(y^{\lambda_k})}{|\bar x^k -y^{\lambda_k}|^{n+2s}} dy \\
\leq &C_{n, s} \int_{\Sigma_{\lambda_k}} \frac{2h_k(\bar x^k)-h_k(y)-h_k(y^{\lambda_k})}{|\bar x^k -y^{\lambda_k}|^{n+2s}} dy \\
\leq & C_{n, s} \int_{\Sigma_{\lambda_k}} \frac{2h_k(\bar x^k)+m_k^2(\eta_k(y)+\eta_k(y^{\lambda_k}))}{|\bar x^k -y^{\lambda_k}|^{n+2s}} dy \\
\leq &\frac{C_1(h_k (\bar x^k)+m_k^2) }{\delta_k^{2s}}\\
\leq & \frac{C_1(-m_k+m_k^2)}{\delta_k^{2s}}.
\end{aligned}
\end{equation}
Here, we have used the fact that $|\bar x^k - y| \leq |\bar x^k - y^\lambda|$ in $\Sigma_{\lambda_k}$ and that $h_k(\bar x^k) \leq -m_k$.

On the other hand, from the first equation in \eqref{eq3.1}, we have
\[
(-\Delta)^s h_k(\bar x^k) = (-\Delta)^s w_{\lambda_k}(\bar x^k) - m_k^2 (-\Delta)^s \eta_k(\bar x^k) \geq -C_2 m_k - \frac{C_3 m_k^2}{\delta_k^{2s}}.
\]

Combining the above estimate with \eqref{eq3.7}, we obtain
\[
-C_2 m_k - \frac{C_3 m_k^2}{\delta_k^{2s}} \leq \frac{C_1(-m_k + m_k^2)}{\delta_k^{2s}}.
\]

Multiplying both sides by $-\frac{\delta_k^{2s}}{m_k}$ and noting that $\lim_{k \to \infty} m_k = 0$ and $\lim_{k \to \infty} \delta_k = 0$, we deduce
\[
0 < C_1 \leq (C_1 + C_3) m_k + C_2 \delta_k^{2s} \to 0 \quad \text{as } k \to \infty,
\]
which is a contradiction.

Therefore, we conclude that $\delta_k$ is bounded away from zero for all sufficiently large $k$.
\vspace{0.5cm}

Next we  claim that
\begin{equation}\label{eq3.66}
\epsilon_k:=\frac{x_n^k}{2}\,\ \mbox{is bounded away from zero for sufficiently large }\,\, k.
\end{equation}
If \eqref{eq3.66} does not hold, then $\epsilon_k \to 0$ as $k \to \infty$. Let
$$\mu_k(x) = \eta\Big(\frac{x - x^k}{\epsilon_k}\Big) \in C_0^\infty(B_{\epsilon_k}(x^k))$$
 be a sequence of smooth cut-off functions
 satisfying
 $$0\leq \mu_k(x)\leq 1 \,\,\mbox{in} \,\, B_{\epsilon_k} (x^k)\,\, \mbox{and}\,\,
 \mu_k(x) \equiv 1 \,\,\mbox{in} \,\, B_{\epsilon_k/2} (x^k),
 $$
 where $\eta$ is defined as in \eqref{eqeta}.

 Define
$$
g_k(x)=w_{\lambda_k}(x)-m_k^2 \mu_k(x).
$$
By an argument analogous to that used for $h_k$, the function $g_k(x)$ satisfies
\begin{equation}\label{key-est}
\begin{cases}
g_k(x^k) \leq -m_k, & \\
g_k(x) = w_{\lambda_k}(x) \geq -m_k, & x \in \Sigma_{\lambda_k} \setminus B_{\epsilon_k}(x^k),
\end{cases}
\end{equation}
and there exists a point $\hat x^k \in B_{\epsilon_k}(x^k) \subset \Omega_{\lambda_k}$ (see Figure 5) such that
\begin{equation}\label{key-est1}
-m_k - m_k^2 \leq g_k(\hat x^k) = \inf_{\Sigma_{\lambda_k}} g_k(x) \leq -m_k.
\end{equation}

It then follows from the definition of $g_k$ that
\[
-m_k \leq w_{\lambda_k}(\hat x^k) \leq -m_k + m_k^2 < 0
\]
for all sufficiently large $k$.

Finally, at this minimum point $\hat x^k$ of $g_k$, we estimate $(-\Delta)^s g_k(\hat x^k)$ as follows:
\begin{equation}\label{estimategk}
\begin{aligned}
(-\Delta)^s g_k(\hat x^k)
&=C_{n, s} P.V. \int_{\Sigma_{\lambda_k}} \frac{g_k(\hat x^k)-g_k(y)}{|\hat x^k -y|^{n+2s}} dy +C_{n, s} \int_{\Sigma_{\lambda_k}} \frac{g_k(\hat x^k)-g_k(y^{\lambda_k})}{|\hat x^k -y^{\lambda_k}|^{n+2s}} dy \\
&=C_{n, s} P.V. \int_{\Sigma_{\lambda_k}} \left(g_k(\hat x^k)-g_k(y)\right)\left(\frac{1}{|\hat x^k -y|^{n+2s}}-\frac{1}{|\hat x^k -y^{\lambda_k}|^{n+2s}}\right) dy\\& \quad\quad+C_{n, s} \int_{\Sigma_{\lambda_k}} \frac{2g_k(\hat x^k)+m_k^2(\mu_k(y)+\mu_k(y^{\lambda_k}))}{|\hat x^k -y^{\lambda_k}|^{n+2s}} dy\\
&\leq  C_0  C_{n,s}  (g_k(\hat x^k)+m_k^2)\int_{B_{\epsilon_k}(\hat x^k_{-})} \frac{1}{|\hat x^k -y|^{n+2s}} dy
+C_{n, s} \int_{\Sigma_{\lambda_k}} \frac{-2m_k+2m_k^2}{|\hat x^k -y^{\lambda_k}|^{n+2s}} dy\\
&\leq C_0 C_{n,s}  (-m_k+m_k^2)\int_{B_{\epsilon_k}(\hat x^k_{-})} \frac{1}{|\hat x^k -y|^{n+2s}} dy\\
&\leq \frac{C(-m_k+m_k^2)}{\epsilon_k^{2s}},
\end{aligned}
\end{equation}
where $\hat x^k_{-}$ denotes the reflection of $\hat x^k$ about the boundary $x_n = 0$.
In the first inequality of \eqref{estimategk}, we have used \eqref{key-est}, \eqref{key-est1}, and the fact that for all $y \in B_{\epsilon_k}(\hat x^k_{-}) \subset \mathbb{R}^n_{-}$, when $k$ is sufficiently large,
\[
 |\hat x^k - y| << |\hat x^k - y^{\lambda_k}|,
\]
which implies that there exists a constant $C_0 > 0$ such that
\[
\frac{1}{|\hat x^k - y|^{n+2s}} - \frac{1}{|\hat x^k - y^{\lambda_k}|^{n+2s}} \geq \frac{C_0}{|\hat x^k - y|^{n+2s}}.
\]

Then, by an argument analogous to that for $h_k$ and using the equation in \eqref{eq3.1}, we obtain a contradiction, which verifies \eqref{eq3.66}.

Consequently, it follows from \eqref{eq3.6} and \eqref{eq3.66} that there exists a positive constant $0 < \delta_0 < \lambda_0$ such that
\[
x^k \in \Sigma_{\lambda_0 - \delta_0} \setminus \Sigma_{\delta_0} \quad \text{for all sufficiently large } k.
\]

\begin{center}
\begin{tikzpicture}[scale=1.3]
        \fill[blue!3] (-3,0) rectangle (2.5,3.5);
        \draw[blue,thin,dashed,-] (-1.8,-0.65)--(-1.8,0.65);
        \draw[blue,thick] (-1.8,-0.65) circle (0.5);
        \fill[blue] (-1.8,-0.65) circle(0.05);%
        \node[blue] at (-1.5,-0.6) {$\hat{x}_{-}^{k}$};%
        \node[blue] at (-3,-0.7) {$B_{\epsilon_{k}}(\hat{x}_{-}^{k})$};
        \draw[blue,thick] (-1.8,0.65) circle (0.5);
        \fill[blue] (-1.8,0.65) circle(0.05);
        \node[blue] at (-1.5,0.58) {$\hat{x}^{k}$};
        \node[blue] at (-3,0.7) {$B_{\epsilon_{k}}(\hat{x}^{k})$};
        \draw[red!80!black,dashed] (-1.6,0.9)--(-1.3,1.4);
        \node[red!80!black] at (-1.6,1.3) {$\epsilon_{k}$};%
        \fill[red!80!black] (-1.6,0.9) circle(0.05);
        \node[red!80!black] at (-1.4,0.9) {$x^{k}$};
           \draw[red!80!black,thick] (-1.6,1) circle (0.5);
           \node[red!80!black] at (-0.5,1.2) {$B_{\epsilon_{k}}(x^{k})$};
       \draw[black,thick,-] (-3,3.5)--(2.5,3.5);
    	\draw[black,thick,->] (-3.2,0)--(3,0);
    	\node[black] at (3.15,0) {$x'$};
    	\draw[black,thick,->] (0,-2.5)--(0,5);
    	\node[black] at (0.25,5) {$x_{n}$};
    	\node[black] at (0.25,-0.25) {$\textit{O}$};
        \fill[black] (0,0) circle(0.04);
        \node[black] at (2.75,4) {$T_{\lambda_{k}}$};
        \node[blue!40!black] at (1.5,1.5) {\Large$\Omega_{\lambda_{k}}$};
          \node[below=1cm, align=center] at (0,-2.5)
            {\fontsize{12}{12}\selectfont Figure~5. The domains $B_{\varepsilon_k}(x^k), B_{\varepsilon_k}(\hat x^k)$ and $B_{\varepsilon_k}(\hat x^k_-)$.};
 \end{tikzpicture}
\end{center}


(ii)
To proceed with the proof, we further claim that
\begin{equation}\label{eq3.9}
u(x^k) \to 0 \quad \text{as } k \to \infty.
\end{equation}

Indeed, if this is not true, there would exist a positive constant $C_4 > 0$ and a subsequence of $\{x^k\}$, which we still denote by $\{x^k\}$, such that
\[
u(x^k) \geq 2C_4 > 0.
\]
It then follows from the uniform continuity of $u$ that there exists a small constant $r_1 > 0$ such that $B_{r_1}(x^k) \subset \Omega_{\lambda_0}$ (see Figure 6) and
\[
u(x) \geq {C_4} > 0, \quad x \in B_{r_1}(x^k).
\]

By the averaging effect (Theorem \ref{th1.3}), there exist small constants $r_2 > 0$ and $C_5 > 0$ such that   $B_{r_2}((x^k)', 2\lambda_0) \subset \Omega_{2\lambda_0 + \delta_0}$ and
\[
u(x) \geq C_5 > 0, \quad x \in ~B_{r_2}((x^k)', 2\lambda_0),
\]
in particular,
$$
u((x^k)',2\lambda_0)\geq {C_5}>0.
$$
It follows from the definition of $w_\lambda$ and $u((x^k)', 0)=0$ that
$$
w_{\lambda_0}((x^k)', 0)\geq {C_5}>0.
$$
By  the boundary continuity of $u$ (Theorem \ref{th1.4}),  there exist small  constants $r_3 > 0$  and $C_6>0$ such that
$$
w_{\lambda_0}(x)\geq C_6>0, \quad x \in B_{r_3}((x^k)', 0),
$$

Noting that $w_{\lambda_0}$ is nonnegative in $\Sigma_{\lambda_0}$ and satisfies
\[
(-\Delta)^s w_{\lambda_0}(x) = C_{\lambda_0}(x) w_{\lambda_0}(x), \quad x \in \Sigma_{\lambda_0},
\]
where
\[
C_{\lambda_0}(x) = \frac{f(u_{\lambda_0}(x)) - f(u(x))}{u_{\lambda_0}(x) - u(x)},
\]
we can apply the averaging effect (Theorem \ref{AE-anti}) to $w_{\lambda_0}(x)$ once again.
This yields the existence of small positive constants $r_4$ and $C_7$ such that
\[
w_{\lambda_0}(x) \geq C_7 > 0, \quad x \in B_{r_4}(x^k).
\]
By $\lambda_k \to \lambda_0$ and the  uniform continuity of $u$, we have
\[
w_{\lambda_k}(x) \geq \frac{C_7}{2} > 0, \quad x \in B_{r_4}(x^k),
\]
which contradicts the fact that $w_{\lambda_k}(x^k) \to 0$ as $k \to \infty$.
Hence, \eqref{eq3.9} is verified.


\begin{center}
\begin{tikzpicture}[scale=1]
    	\draw[black,thick,->] (-4.3,0)--(4.3,0);
        \node[black] at (4.5,0){$x'$};
    	\draw[black,thick,->] (0,-1.5)--(0,8);
    	\node[black] at (0.25,8.2) {$x_{n}$};
        \draw[black,thick,dashed] (-3.5,3)--(3.5,3);
        \node[black] at (4.0,3){$T_{\lambda_{0}}$};
        \draw[black,thick,dashed] (-3.5,6)--(3.5,6);
        \node[black] at (4,6){${\color{blue}T_{2\lambda_{0}}}$};
        \draw[red!80!black,thick] (-1.5,0) circle (0.9);
        \fill[blue,thick] (-1.5,0) circle (0.05);
        \node[blue] at (-1.5,-0.25) {\scriptsize$((x^{k})',0)$};
         \node[blue] at (-1.5,5.75) {\scriptsize$((x^{k})',2\lambda_0)$};
        \draw[red!80!black,thick,dashed] (-1.5,0)--(-0.8,0.6);
        \node[red!80!black] at (-1.45,0.45) {$r_{3}$};
        \draw[blue,thick] (-1.5,2) circle (0.9);
        \fill[blue] (-1.5,2) circle (0.05);
        \node[blue] at (-1.4,1.8) {$x^{k}$};
        \draw[blue,thick,dashed] (-1.5,2)--(-0.7,2.45);
         \draw[red!80!black,thick,dashed] (-1.5,2)--(-2,2);
        \node[blue] at (-1.1,2.5) {$r_{1}$};
        \draw[red!80!black,thin,->] (-1.75,2.05) arc(0:90:0.9 and 0.3);
        \node[red!80!black] at (-3.5,2.4) {$w_{\lambda_0}\geq C_7$};
        \draw[red!80!black,thin,->] (-1.85,0.55) arc(0:90:0.9 and 0.3);
        \node[red!80!black] at (-3.6,0.8) {$w_{\lambda_0}\geq C_6$};
        \draw[blue!80!black,thin,->] (-0.7,2) arc(180:90:1.0 and 0.4);
        \node[blue!80!black] at (1,2.4) {$u\geq C_4$};
         \draw[blue!80!black,thin,->] (-0.7,6.2) arc(180:90:1.0 and 0.4);
         \node[blue!80!black] at (1,6.6) {$u\geq C_5$};
        \draw[blue,thick] (-1.5,6) circle (0.9);
        \node[red!80!black] at (-1.75,1.8) {$r_{4}$};
        \draw[red!80!black,thick] (-1.5,2) circle (0.5);
        \fill[blue] (-1.5,6) circle (0.05);
        \draw[blue,thick,dashed] (-1.5,6)--(-0.8,6.6);
        \node[blue] at (-1.4,6.5) {$r_{2}$};
        \node[black] at (0.25,-0.25) {$\textit{O}$};
        \fill[black] (0,0) circle(0.04);
         \node[below=1cm, align=center] at (0,-1)
     {\fontsize{12}{12}\selectfont Figure~6. The domains $B_{r_1}(x^k), B_{r_2}((x^k)',2\lambda_0)$ and $B_{r_3}((x^k)',0)$.};     \end{tikzpicture}
 \end{center}

Next, we  employ   perturbation techniques to prove that
\begin{equation}\label{f0}
f(0) = 0.
\end{equation}

Let $\eta(x) \in C_0^\infty(B_1(0))$ be a smooth cut-off function satisfying
\begin{equation*}
\begin{cases}
0\leq \eta(x) \leq 1, & x\in B_1(0),\\
\eta(x)=1,& x\in B_{1/2}(0),\\
\eta (x)=0,& x\in \mathbb{R}^n \backslash B_1(0).
\end{cases}
\end{equation*}
Denote
\[u_k(x):=u(x)-\alpha_k\eta_k(x)\,\,\mbox{ and }\,\, \eta_k(x):=\eta(\frac{x-x^k}{\delta_0})\]
with
\begin{equation}\label{def-afk}
   \alpha_k := u(x^k) \to 0 \ \mbox{as}\ k \to \infty.
\end{equation}

Then $u_k$ satisfies
\[u_k(x^k)=u(x^k)-\alpha_k\eta_k(x^k)=0,\]
and
\[u_k(x)=u(x)\geq 0,\ x \in \mathbb{R}^n\backslash B_{\delta_0}(x^k).\]
Therefore, there exists a sequence of  $\{\tilde{x}_k\}\subset B_{\delta_0}(x^k)\subset \R^n_+$ such that
\begin{equation}\label{inf-uk}-\alpha_k\leq u_k(\tilde{x}_k)=\inf_{x\in \mathbb{R}^n} u_k(x)\leq 0.\end{equation}
equivalently,
\begin{equation}\label{cov-u}0\leq u(\tilde{x}_k)\leq \alpha_k\to 0,\ k\to\infty.\end{equation}

In addition,  it follows from the equation \eqref{main}, the definition of $u_k$, and \eqref{inf-uk} that
\begin{eqnarray*}
\begin{aligned}
f(u(\tilde{x}_k))&=(-\Delta)^s u(\tilde{x}_k)\\
&=(-\Delta)^s u_k(\tilde{x}_k)+\alpha_k(-\Delta)^s \eta_k(\tilde{x}_k)\\
&\leq \alpha_k(-\Delta)^s \eta_k(\tilde{x}_k)\\
&\leq C\alpha_k,
\end{aligned}
\end{eqnarray*}
then let $k\to \infty$, by \eqref{cov-u} and the fact that $f\in C_{loc}(\mathbb{\R}), f(0)\geq 0$, we derive \eqref{f0}, that is,
\[f(0) =  0.\]

(iii)
Next, we aim at showing that \( w_{\lambda_k}(x^k) > 0 \), which will lead to a contradiction with \eqref{ctdct-1}.

To proceed, we define
 \begin{equation}\label{def-vk}
v_k(x) := \frac{u(x)}{\alpha_k},
\end{equation}
where the normalization constant $ \alpha_k>0 $ is given as in \eqref{def-afk}.

In view of  \eqref{f0},  $v_k(x)$ satisfies
\begin{equation}\label{eq-vk}
\begin{cases}
(-\Delta)^s v_k(x) = \frac{1}{\alpha_k} f(u(x)) = \frac{f(u)-f(0)}{u-0} \frac{u}{\alpha_k} = c(x)v_k(x), & x \in \mathbb{R}^n_+,\\
v_k(x) = 0, & x \in \mathbb{R}^n \backslash \mathbb{R}^n_+,\\
v_k(x) \geq 0, & x \in \mathbb{R}^n_+,
\end{cases}
\end{equation}
where $c(x)$ is uniformly bounded in $\Sigma_\lambda$ for each $\lambda>0$.

From the equation  \eqref{eq-vk}, the Harnack inequality (Lemma \ref{le3.2}) applied to $v_k$  implies
\[
\sup_{{B}_{\lambda_0}((x^k)',\delta_0+\lambda_0)} v_k \leq C \inf_{{B}_{\lambda_0}((x^k)',\delta_0+\lambda_0)} v_k \leq C,
\]
where $C$ is a constant depending on $\lambda_0$ but  independent of $k$. Here we have used the fact that $$x^k \in {B}_{\lambda_0}((x^k)',\delta_0+\lambda_0)\ \mbox{and}\ v_k(x^k)=1.$$
Furthermore, by Lemma~\ref{le3.1}, we obtain that, for any $\gamma \in (0,s)$,\begin{equation}\label{bdd-vk}
    \|v_k\|_{C^\gamma({B}_{\lambda_0-\delta_0/2}((x^k)',\delta_0+\lambda_0))}\leq C.
    \end{equation}

Since $v_k(x^k) = 1$, it follows from the uniform continuity of $u$  and \eqref{bdd-vk}
that  there exist a small constant $r_5 > 0$ and a constant $C_8 > 0$ such that $B_{r_5}(x^k) \subset \Omega_{\lambda_0}$ (see Figure 7) and
\[
v_k(x) \geq C_8 > 0, \quad x \in B_{r_5}(x^k).
\]
We can apply the averaging effect (Theorem \ref{th1.3}) to $v_k(x)$ once again.
It follows that there are small positive constants $r_6 > 0$ and $C_9 > 0$ such that
$$
v_k(x) \geq C_9>0, \,\, x\in B_{r_6}((x^k)', 2\lambda_0).
$$
Denote
$$
\tilde w_{\lambda_0}^{(k)}(x)=v_k(x^{\lambda_0})-v_k(x).
$$
Noting that $v_k(x) \equiv 0$ on $\{x_n=-\delta_0/4\}$ and using the uniform continuity of $u$ together with \eqref{bdd-vk}, we infer that there exist some small constants $r_7 > 0$ and $C_{10} > 0$ such that
\[
\tilde w_{\lambda_0}^{(k)}(x)=v_k(x^{\lambda_0}) \geq C_{10} > 0, \quad x \in B_{r_7}((x^k)', -\delta_0/4)\subset \mathbb{R}^n_-.
\]

Clearly, $\tilde w_{\lambda_0}^{(k)}(x) = \frac{w_{\lambda_0}}{\alpha_k}$ is nonnegative in $\Sigma_{\lambda_0}$ and satisfies
\[
(-\Delta)^s \tilde w_{\lambda_0}^{(k)}(x) = \tilde C_{\lambda_0}(x) \tilde w_{\lambda_0}^{(k)}(x), \quad x \in \Sigma_{\lambda_0},
\]
where
\[
\tilde C_{\lambda_0}(x) = \frac{f(u(x^{\lambda_0})) - f(u(x))}{u(x^{\lambda_0}) - u(x)}.
\]

Therefore, applying the averaging effect (Theorem \ref{AE-anti}) to $\tilde w_{\lambda_0}^{(k)}(x)$ once again, we obtain that there exists  a small constant $C_{11} > 0$ such that
\begin{equation}\label{eq3.11-0}
\tilde w_{\lambda_0}^{(k)}(x)\geq C_{11}>0,\,\, B_{r_5} (x^k).
\end{equation}

 \begin{center}
\begin{tikzpicture}[scale=1]
        \fill[blue!5] (-3,0) rectangle (2.5,3);
    	\draw[black,thick,->] (-3,0)--(2.5,0);
        \node[black] at (2.7,0){$x'$};
    	\draw[black,thick,->] (0,-1.5)--(0,7);
    	\node[black] at (0.3,7) {$x_{n}$};
        \draw[black,thick,-] (-3,3)--(2.5,3);
        \node[black] at (3.2,3){$x_{n}=\lambda_{0}$};
        \draw[black,thick,-] (-3,6)--(2.5,6);
        \node[black] at (3.3,6){$x_{n}=2\lambda_{0}$};
        \draw[red!80!black,thick] (-1.5,-0.5) circle (0.9);%
       \fill[blue] (-1.5,-0.5) circle (0.05);%
        \node[blue] at (-1.5,-0.75) {\scriptsize$((x^{k})',-\frac{\delta_0}{4})$};%
        \draw[red!80!black,thick,dashed] (-1.5,-0.5)--(-0.65,-0.15);
        \node[red!80!black] at (-1.3,-0.15) {$r_{7}$};
        \draw[blue,thick] (-1.5,2) circle (0.9);
        \fill[blue] (-1.5,2) circle (0.05);
        \node[blue] at (-1.5,1.8) {$x^{k}$};
        \draw[blue,thick,dashed] (-1.5,2)--(-0.65,2.35);
        \node[blue] at (-1.2,2.4) {$r_{5}$};
        \draw[blue,thick] (-1.5,6) circle (0.9);
          \draw[red!80!black,thick] (-1.5,2) circle (0.4);
        \fill[blue] (-1.5,6) circle (0.05);
        \draw[blue,thick,dashed] (-1.5,6)--(-0.65,6.35);
        \node[blue] at (-1.3,6.4) {$r_{6}$};
        \fill[black] (0,0) circle(0.04);
         \draw[red!80!black,thin,->] (-1.75,2.05) arc(0:90:0.9 and 0.3);
        \node[red!80!black] at (-3.7,2.4) {$\tilde w_{\lambda_0}^{(k)}\geq C_{11}$};
           \draw[red!80!black,thin,->] (-1.85,-0.45) arc(0:90:0.9 and 0.3);%
        \node[red!80!black] at (-3.7,-0.2) {$\tilde w_{\lambda_0}^{(k)}\geq C_{10}$};%
        \draw[blue!80!black,thin,->] (-0.7,2) arc(180:90:1.0 and 0.4);
        \node[blue!80!black] at (1,2.4) {$v_k\geq C_8$};
         \draw[blue!80!black,thin,->] (-0.7,6.1) arc(180:90:1.0 and 0.4);
         \node[blue!80!black] at (1,6.5) {$v_k\geq C_9$};
        \node[black] at (0.25,-0.25) {$\textit{O}$};
        \node[orange!70!black] at (1.5,1.5) {\Large$\Omega_{\lambda_{0}}$};
        \node[blue] at (-1.5,5.75) {\scriptsize$((x^{k})',2\lambda_{0})$};
          \node[below=1cm, align=center] at (0,-1)
     {\fontsize{12}{12}\selectfont Figure~7. The domains $B_{r_5}(x^k), B_{r_6}((x^k)',2\lambda_0)$ and $B_{r_7}((x^k)',-\frac{\delta_0}{4})$.};
 \end{tikzpicture}
 \end{center}


We are left to show that there exists a sufficiently small constant
 $0<r_0<r_5$ such that \begin{equation}\label{eq3.11}
\tilde w_{\lambda_k}^{(k)}(x):=v_k(x^{\lambda_k})-v_k(x)
\geq C_{11}/2>0, \quad x \in B_{r_0} (x^k),
\end{equation}
for sufficiently large $k$. Consequently,
\[
w_{\lambda_k}(x^k) = \alpha_k \, \tilde w_{\lambda_k}^{(k)}(x^k) > 0,
\]
which contradicts \eqref{ctdct-1}.


Now, take $r_0 <\min\{\frac{ \delta_0}{2},r_5\}$ sufficiently small such that
\[
B_{r_0}(x^k) \subset \Sigma_{\lambda_0-\frac{ \delta_0}{2}}\backslash \Sigma_{\frac{ \delta_0}{2}},
\]
then for each $x \in B_{r_0}(x^k)$, one has
\[
x^{\lambda_0}, \, x^{\lambda_k} \in {B}_{\lambda_0-\delta_0}((x^k)',\delta_0+\lambda_0).
\]
It follows from \eqref{bdd-vk} that
\begin{equation}\label{eq3.17}
\begin{aligned}
\left| \tilde w_{\lambda_0}^{(k)}(x)-\tilde w_{\lambda_k}^{(k)}(x) \right|
=& \left|v_k(x^{\lambda_0}) -v_k(x^{\lambda_k}) \right|\\
\leq & C \left|x^{\lambda_0} -x^{\lambda_k} \right|^\gamma \\
\leq & C |\lambda_0-\lambda_k|^\gamma
\to 0 \quad \text{as } k \to \infty.
\end{aligned}
\end{equation}

Since we already have (see \eqref{eq3.11-0})
\[
\tilde w_{\lambda_0}^{(k)}(x)\geq C_{11}>0, \quad x\in B_{r_5} (x^k),
\]
combining this with \eqref{eq3.17} yields \eqref{eq3.11}, which contradicts \eqref{ctdct-1}.
Therefore, $\lambda_0=\infty$.

{\it{Step 3.}} In this step, we show that $\partial_{x_n}u(x)>0$ for all $ x\in \mathbb R^n_+.$ 

From the definition of $\lambda_0$ and $\lambda_0=\infty,$ we derive
\[
w_\lambda(x)\ge 0 \quad \text{in } \Sigma_\lambda,\qquad \forall\,\lambda>0.
\]
By the strong maximum principle, we further obtain
\[
w_\lambda(x)>0 \quad \text{in } \Sigma_\lambda,\qquad \forall\,\lambda>0.
\]

Applying the Hopf  lemma to $w_\lambda$ at the flat boundary
$T_\lambda=\{x_n=\lambda\}$, we deduce that
\[
\partial_{\nu} w_\lambda(x)>0 \quad \text{for all } x\in T_\lambda,
\]
where $\nu=\mathbf e_n$ denotes the outward unit normal vector to
$\Sigma_\lambda=\{x_n<\lambda\}$.

For $x=(x',\lambda)\in T_\lambda$, the outward normal derivative is given by
\[
\partial_{\nu} w_\lambda(x)
=\lim_{h\to 0}\frac{w_\lambda(x',\lambda+h)-w_\lambda(x',\lambda)}{h}.
\]
Since $w_\lambda(x',\lambda)=0$, we have
\[
w_\lambda(x',\lambda+h)
=u(x',\lambda-h)-u(x',\lambda+h),
\]
and hence
\[
\partial_{\nu} w_\lambda(x)
=\lim_{h\to 0}
\frac{u(x',\lambda-h)-u(x',\lambda+h)}{h}
=-2\,\partial_{x_n}u(x',\lambda).
\]
Consequently,
\[
\partial_{x_n}u(x',\lambda)>0 \quad \text{for all } x' \in \mathbb R^{n-1}.
\]

Since $\lambda>0$ is arbitrary, we conclude that
\[
\partial_{x_n}u(x)>0 \quad \text{for all } x\in \mathbb R^n_+.
\]
This completes the proof of Theorem~\ref{th1.1}.
\end{proof}
\bigbreak
\bigbreak
\bigbreak

\begin{proof}[Proof of Theorem \ref{th1.10}]
Similar to the first step in the proof of Theorem~\ref{th1.1}, we first obtain that
\[
w_{\lambda}\ge 0 \quad \text{in } \Omega_{\lambda},
\]
for all sufficiently small $\lambda>0$.

We then move the plane $T_{\lambda}$ in the $x_n$-direction to its limiting position $T_{\lambda_0}$, where
\begin{equation}\label{def-l0b}
\lambda_0:=\sup \Bigl\{\lambda \,\big|\, w_\mu(x)\ge 0 \ \text{in } \Sigma_\mu \ \text{for any } \mu\le \lambda \Bigr\}.
\end{equation}
Assuming that $u$ is bounded in any slab $\Sigma_\lambda$, we apply a limiting argument to show that $\lambda_0=+\infty$.

Suppose by contradiction that $0<\lambda_0<+\infty$. Then there exists a decreasing sequence $\{\lambda_k\}$ such that
\[
\lambda_k \searrow \lambda_0 \quad \text{as } k\to\infty,
\]
and a sequence $\{z^k\}\subset \Omega_{\lambda_k}$ satisfying
\begin{equation}\label{wzk-neg}
w_{\lambda_k}(z^k)<0.
\end{equation}

By the narrow region principle (Theorem~\ref{lem:multi-slab-nrp-simple}), we may further assume that
\begin{equation}\label{zk}
z^k\in \Sigma_{\lambda_0-\delta}\setminus\Sigma_\delta,
\end{equation}
for some sufficiently small $\delta>0$.

Indeed, otherwise we would have
\[
w_{\lambda_k}(x)\ge 0 \quad \text{in } \Sigma_{\lambda_0-\delta}\setminus\Sigma_\delta.
\]
Since $(\Sigma_{\lambda_k}\setminus\Sigma_{\lambda_0-\delta})\cup\Sigma_\delta$ is a narrow region in $\Sigma_{\lambda_k}$ for small $\delta$ and large $k$, the narrow region principle implies that
\[
w_{\lambda_k}(x)\ge 0 \quad \text{in } \Sigma_{\lambda_k},
\]
which contradicts \eqref{wzk-neg}.

Define the translated functions
\begin{equation}\label{u-trans}
\tilde u_k(x):=u(x'+(z^k)',x_n).
\end{equation}
By \eqref{main}, each $\tilde u_k$ solves
\begin{equation}\label{tuk}
\begin{cases}
(-\Delta)^s \tilde u_k=f(\tilde u_k), & x\in \mathbb{R}^n_+,\\
\tilde u_k=0, & x\in \mathbb{R}^n\backslash \mathbb{R}^n_+,\\
\tilde u_k\ge 0, & x\in \mathbb{R}^n_+.
\end{cases}
\end{equation}

We first claim that
\begin{equation}\label{conv0}
\tilde u_k \to 0 \quad \text{locally uniformly in } \mathbb{R}^n_+.
\end{equation}

Indeed, since $u$ is bounded in any slab $\Sigma_\lambda$ and $f\in C^{0,1}_{\mathrm{loc}}([0,\infty))$, by interior Schauder estimates (Lemma~\ref{le3.1}) and boundary Hölder regularity (Theorem~\ref{th1.4}), we obtain
\begin{equation*}
\|\tilde{u}_k\|_{C^{2s+\varepsilon}(\Sigma_{\lambda}\backslash\Sigma_{\delta})}\leq C\|\tilde{u}_k\|_{L^{\infty}(\Sigma_{\lambda})}\leq C_{\lambda},
\end{equation*}
and
\begin{equation*}
\|\tilde{u}_k\|_{C^{s}(\overline{\Sigma}_{\delta})}\leq C\|\tilde{u}_k\|_{L^{\infty}(\Sigma_{\delta})}\leq C_{\lambda},
\end{equation*}
for some  $\varepsilon>0$.

Passing to a subsequence if necessary, $\tilde u_k$ converges locally uniformly to a function
\[
u_\infty\in C^{2s+\varepsilon}_{\mathrm{loc}}(\mathbb{R}^n_+)\cap C^s(\overline{\mathbb{R}^n_+}),
\]
which satisfies
\[
\begin{cases}
(-\Delta)^s u_\infty=f(u_\infty), & x\in \mathbb{R}^n_+,\\
u_\infty=0, & x\in \mathbb{R}^n_-,\\
u_\infty\ge 0, & x\in \mathbb{R}^n_+.
\end{cases}
\]

Next, we show that
\begin{equation}\label{eqv0}
u_\infty\equiv 0 \quad \text{in } \mathbb{R}^n_+.
\end{equation}

Let
\[
w_{\infty,\lambda_0}:=u_{\infty,\lambda_0}-u_\infty,
\qquad
\tilde z^k:=(0',z^k_n).
\]
By \eqref{zk} and \eqref{u-trans}, the sequence $\{\tilde z^k\}$ is bounded in $\Sigma_{2\lambda_0}$, and
\begin{equation}\label{eqv}
\tilde u_k(\tilde z^k)=u(z^k).
\end{equation}
Thus, up to a subsequence, there exists $\tilde z_n \in (\delta, \lambda_0-\delta)$ such that
\begin{equation*}
\tilde{z}^k \to \tilde z:=(0',\tilde{z}_{n}),\,\, \mbox{as}\,\, k\to \infty.
\end{equation*}

Using \eqref{wzk-neg}, \eqref{eqv}, and the uniform continuity of $u$, we obtain
\begin{equation*}
\begin{aligned}
w_{\infty,\lambda_0}(\tilde{z})=& \lim\limits_{k\to\infty}(\tilde u_{k,\lambda_0}-\tilde u_k)(\tilde z) \\=&
\lim\limits_{k\to\infty}(\tilde u_{k,\lambda_0}-\tilde u_k)(\tilde z^k)
\\
=&\lim\limits_{k\to\infty}( u_{\lambda_0}- u)( z^k) \\
=&\lim\limits_{k\to\infty}( u_{\lambda_k}- u)( z^k) \leq 0.\\
\end{aligned}
\end{equation*}

On the other hand, by the definition of $\lambda_0$ in \eqref{def-l0b} and the translation \eqref{u-trans}, we have
\[
\tilde u_{k,\lambda_0}\ge \tilde u_k \quad \text{in } \Sigma_{\lambda_0}.
\]
Passing to the limit as $k\to\infty$, it follows that
\[
w_{\infty,\lambda_0}\ge 0 \quad \text{in } \Sigma_{\lambda_0}.
\]
Therefore,
\begin{equation*}
w_{\infty,\lambda_0}(\tilde z)=0.
\end{equation*}
By the strong maximum principle for antisymmetric functions,
\[
w_{\infty,\lambda_0}\equiv 0 \quad \text{in } \Sigma_{\lambda_0}.
\]
It follows that
\[
u_{\infty}(x)=0 \quad \text{for all } x\in T_{2\lambda_0}.
\]
Applying the strong maximum principle to $u_{\infty}$ once again, we obtain
\[
u_{\infty}\equiv 0 \quad \text{in } \mathbb{R}^n_+.
\]
Therefore, \eqref{eqv0} and \eqref{conv0} are verified, which in turn yields
\[
f(0)=0.
\]

Now, for each $k\ge 1$, define
\[
\tilde v_k(x):=\frac{\tilde u_k(x)}{\tilde u_k(\tilde z)}.
\]
Then, by \eqref{tuk}, $\tilde v_k$ satisfies
\begin{equation*}
\begin{cases}
(-\Delta)^s \tilde v_k(x)
= \dfrac{f(\tilde u_k(x))-f(0)}{\tilde u_k(x)}\,\tilde v_k(x)
=: c_k(x)\tilde v_k(x), & x\in \mathbb{R}^n_+,\\[1mm]
\tilde v_k(x)=0, & x\in \mathbb{R}^n_-,\\
\tilde v_k(x)\ge 0, & x\in \mathbb{R}^n_+ .
\end{cases}
\end{equation*}
Here, $c_k$ is locally uniformly bounded in $\mathbb{R}^n_+$.

According to the definition of $\lambda_0$, the functions $\tilde v_k$ are monotone increasing in the $x_n$-direction in $\Sigma_{\lambda_0}$.
Applying the Harnack inequality (Lemma~\ref{le3.2}) to $\tilde v_k$, we obtain
\[
\sup_{B'_r(0')\times [0,T]} \tilde v_k
\le C\,\tilde v_k(\tilde z)=C,
\]
for any $r>0$ and $T>\tilde z_n$, where $C$ depends only on $r$ and $T$, but is independent of $k$.

Furthermore, by Lemma~\ref{le3.1} and Theorem~\ref{th1.4}, we have
\begin{equation}\label{I-bdd-vkt}
\|\tilde v_k\|_{C^{2s+\varepsilon}_{\mathrm{loc}}(\mathbb{R}^n_+)}\le C,
\end{equation}
and
\begin{equation*}
\|\tilde v_k\|_{C^{s}(B'_r(0')\times [0,h])}\le C_{r,h}.
\end{equation*}
Letting $k\to\infty$, it follows that $\tilde v_k$ converges locally uniformly in $\mathbb{R}^n_+$ to some
\[
v_\infty\in C^{2s+\varepsilon}_{\mathrm{loc}}(\mathbb{R}^n_+)\cap C(\overline{\mathbb{R}^n_+}),
\]
which solves
\begin{equation*}
\begin{cases}
(-\Delta)^s v_\infty(x)=c_\infty(x)v_\infty(x), & x\in \mathbb{R}^n_+,\\
v_\infty(x)=0, & x\in \mathbb{R}^n\backslash \mathbb{R}^n_+,\\
v_\infty(x)\ge 0, & x\in \mathbb{R}^n_+ .
\end{cases}
\end{equation*}

As in the argument for $u_\infty$ above, it suffices to show that
\begin{equation}\label{eqvv}
v_{\infty,\lambda_0}(\tilde z)=v_\infty(\tilde z).
\end{equation}
Clearly, by the definition of $\lambda_0$,
\begin{equation}\label{gev}
v_{\infty,\lambda_0}(\tilde z)\ge v_\infty(\tilde z).
\end{equation}
Moreover, using \eqref{wzk-neg}, \eqref{eqv}, and the local uniform continuity of $\tilde v_k$ implied by \eqref{I-bdd-vkt}, we obtain
\begin{equation*}
\begin{aligned}
\lim_{k\to\infty}(v_{\infty,\lambda_k}-v_\infty)(\tilde z^k)
&=\lim_{k\to\infty}(\tilde v_{k,\lambda_k}-\tilde v_k)(\tilde z^k) \\
&=\lim_{k\to\infty}\frac{(\tilde u_{k,\lambda_k}-\tilde u_k)(\tilde z^k)}{\tilde u_k(\tilde z)} \\
&=\lim_{k\to\infty}\frac{(u_{\lambda_k}-u)(z^k)}{\tilde u_k(\tilde z)} \\
&=\lim_{k\to\infty}\frac{w_{\lambda_k}(z^k)}{\tilde u_k(\tilde z)}\le 0 .
\end{aligned}
\end{equation*}
Therefore,
\begin{equation*}
\begin{aligned}
v_{\infty,\lambda_0}(\tilde z)-v_\infty(\tilde z)
&=\lim_{k\to\infty}\bigl[v_{\infty,\lambda_0}(\tilde z)-v_{\infty,\lambda_k}(\tilde z^k)\bigr]
 +\lim_{k\to\infty}(v_{\infty,\lambda_k}-v_\infty)(\tilde z^k) \\
&\le \lim_{k\to\infty}\bigl[v_{\infty,\lambda_0}(\tilde z)-v_{\infty,\lambda_k}(\tilde z^k)\bigr]=0.
\end{aligned}
\end{equation*}
Combining this with \eqref{gev}, we obtain \eqref{eqvv}.

Consequently, applying the strong maximum principle twice yields
\[
v_\infty\equiv 0 \quad \text{in } \mathbb{R}^n,
\]
which contradicts the fact that
\[
v_\infty(\tilde z)=1.
\]
Therefore, $\lambda_0=\infty$. By an argument analogous to that used in the proof of
Theorem~\ref{th1.1}, we conclude that
\[
\partial_{x_n}u(x)>0 \quad \text{for all } x\in \mathbb R^n_+.
\]
This completes the proof of Theorem~\ref{th1.10}.
\end{proof}

\textbf{Acknowledgments}
 W. Chen is
 partially supported by MPS Simons Foundation 847690 and NSFC (Grant No. 12571225).
 
 Y. Guo is partially supported by the National Natural Science Foundation of China (Grant No.12501145, W2531006, 12250710674 and 12031012), the Natural Science Foundation of Shanghai (No. 25ZR1402207),   the China Postdoctoral Science Foundation (No. 2025T180838 and 2025M773061), the Postdoctoral Fellowship Program of CPSF (No. GZC20252004), and the Institute of Modern Analysis-A Frontier Research Center of Shanghai.

L. Wu is partially supported by National Natural Science Foundation of China (Grant No. 12401133).

\vspace{2mm}

\textbf{Conflict of interest.} The authors do not have any possible conflict of interest.

\vspace{2mm}

\textbf{Data availability statement.}
 Data sharing not applicable to this article as no data sets were generated or analysed during the current study.

\end{document}